\documentclass{article}
\usepackage[]{graphicx}
\usepackage[]{amsmath,amssymb,amsthm}
\usepackage[inline,shortlabels]{enumitem}
\usepackage{verbatim,bm}
\usepackage{physics}
\usepackage[mathscr]{euscript}
\usepackage{longtable}
\usepackage{color}
\usepackage{xcolor}
\usepackage{comment}
\usepackage{float}
 
\usepackage[citation-order]{amsrefs}
\newenvironment{rezabib}
{\bibdiv\biblist\setupbib}
{\endbiblist\endbibdiv}
\def\setupbib{\catcode`@=\active}
\begingroup\lccode`~=`@
\lowercase{\endgroup\def~}#1#{\gatherkey{#1}}
\def\gatherkey#1#2{\gatherkeyaux{#1}#2\gatherkeyaux}
\def\gatherkeyaux#1#2,#3\gatherkeyaux{\bib{#2}{#1}{#3}}

\newtheorem{theorem}{Theorem}[section]
\newtheorem{lemma}[theorem]{Lemma}
\newtheorem{corollary}[theorem]{Corollary}
\theoremstyle{definition}

\newtheorem{example}[theorem]{Example}
\newcommand{\defn}[1]{{\em #1}}
\theoremstyle{remark}
\newtheorem{remark}[theorem]{Remark}

\newcommand{\Z}{\mathbb{Z}}
\newcommand{\N}{\mathbb{N}}
\newcommand{\Q}{\mathbb{Q}}
\newcommand{\NN}{\mathcal{N}}
\newcommand{\R}{\mathcal{R}}
\newcommand{\w}{\text{W}}
\newcommand{\bgw}{\text{BGW}}
\newcommand{\srg}{\text{SRG}}
\newcommand{\sgdd}{\text{SGDD}}
\newcommand{\ddg}{\text{DDG}}
\newcommand{\J}{\mathcal{J}}

\setlist[enumerate]{label=(\roman*)}

\def\0{{\bm 0}}   

\begin{document}
 \title{Quasi-balanced weighing matrices, signed strongly regular graphs and association schemes} 

\author{
 Hadi Kharaghani\thanks{Department of Mathematics and Computer Science, University of Lethbridge,
Lethbridge, Alberta, T1K 3M4, Canada. \texttt{kharaghani@uleth.ca}}
\and
Thomas Pender\thanks{Department of Mathematics and Computer Science, University of Lethbridge,
Lethbridge, Alberta, T1K 3M4, Canada. \texttt{thomas.pender@uleth.ca}}
\and
  Sho Suda\thanks{Department of Mathematics,  National Defense Academy of Japan, Yokosuka, Kanagawa 239-8686, Japan. \texttt{ssuda@nda.ac.jp}}
}
\date{\today}

\maketitle

\begin{abstract}
A weighing matrix $W$ is quasi-balanced if 
$|W||W|^\top=|W|^\top|W|$
has at most two off-diagonal entries, where $|W|_{ij}=|W_{ij}|$. A quasi-balanced weighing matrix $W$ signs a strongly regular graph if $|W|$ coincides with its adjacency matrix. 
Among other things, 
signed strongly regular graphs and their equivalent association schemes are presented.
\end{abstract}

\section{Introduction}

A {\it weighing matrix} of order $v$ and weight $k$, denoted $\w(v,k)$, is a $(0,\pm 1)$-matrix $W$ of order $v$ such that $WW^\top = kI$. The special cases in which $k=n-1$ and $k=n$ are termed the {\it Conference} and {\it Hadamard} matrices, respectively. 

A {\it unit-weighing matrix} $W$ of order $v$ and weight $k$ is a matrix with entries from $\{0\}\cup \{x \in \mathbb{C} \mid |x|=1\}$ of order $v$ such that $WW^*=k I_v$. 

Let $\R_n = \{e^{\frac{2\pi im}{n}} : 0 \leq m < n\}$ be the set of the complex $n$-th roots of unity, and let $W$ be a matrix of order $v$ over $\{0\} \cup \R_n$ such that $WW^* = kI_v$. We then say that $W$ is a \emph{Butson weighing} matrix of order $v$ and weight $k$ over $\R_n$.

A  weighing matrix is said to be {\it balanced} if after changing all the entries to their absolute values, it reduces to  the incidence matrix of a symmetric $(v,k,\lambda)$ balanced incomplete block design. In this case, the design is said to be {\it signed} and has the property of being {\it signable}.

The largest class of balanced weighing matrices are those with the classical parameters $\left(v=\frac{q^{m+1}-1}{q-1},k=q^m,\lambda=q^m-q^{m-1}\right)$ over cyclic groups, where $q$ is a prime power. Recently, it was shown in \cite{kps21} that the symmetric designs with parameters $\left(v=1+9\cdot\frac{9^{m+1}-1}{4},k=9^{m+1},\lambda=4\cdot 9^m\right)$ are signable.  
 
In this paper, we introduce a new class of weighing matrices which we will call {\it quasi-balanced}. For a quasi-balanced weighing matrix $W$, the absolute value matrix, i.e. the matrix whose entries are the absolute values of the entries in $W$, denoted $|W|$, enjoys the properties that $|W|$ and $|W|^\top$ commute, and the product $|W||W|^\top$ has at most two off-diagonal entries. Of particular interest is the case in which the matrix $|W|$ is highly structured, such as being the adjacency matrix of a strongly regular graph or the incidence matrix of a symmetric group divisible design.

A strongly regular graph with adjacency matrix $A$ is signable if some of the nonzero entries of $A$ can be changed to $-1$ such that the resulting matrix is a quasi-balanced weighing matrix. In our study of signable strongly regular graphs, we were led to consider the larger class of Butson weighing matrices. 

Finally, we study the association schemes obtained from signed strongly regular graphs or symmetric group divisible designs with certain block regularity conditions and the reverse implications. 
 
\section{Preliminaries}

Throughout this work, $I_n,J_n,O_n$ denote the identity matrix, all-ones matrix,  and zero matrix all of order $n$, respectively.  
We omit the subscript when the order is clear from the context. 

A {\it negacirculant matrix} of order $2n$ is a polynomial in the signed permutation (nega-shift) matrix of order $n$
$$
N=\begin{pmatrix}
0 & 1 & 0 & \cdots & 0 \\
0 & 0 & 1 & \cdots & 0 \\
\vdots & \vdots & \vdots & \ddots & \vdots \\
0 & 0 & 0 & \cdots & 1 \\
-1 & 0 & 0 & \cdots & 0 
\end{pmatrix}. 
$$ 
The matrix $N$ forms a multiplicative cyclic group of order $2n$ which we denote as $\NN_{2,n}$. Note that $g^{-1}=g^\top$ and $-g=g^n$, for every $g \in \NN_{2,n}$.

The idea of the previous paragraph can be generalized as follows. Let $\omega$ be a generator of $\R_u$. Then an $\omega$-circulant matrix is a polynomial in the monomial ($\omega$-shift) matrix of order $n$
\[
U=
 \begin{pmatrix}
  0 & 1 & 0 & \cdots & 0 \\
0 & 0 & 1 & \cdots & 0 \\
\vdots & \vdots & \vdots & \ddots & \vdots \\
0 & 0 & 0 & \cdots & 1 \\
\omega & 0 & 0 & \cdots & 0 
 \end{pmatrix}.
\]
As before, this generates a multiplicative cyclic group of order $\abs{\omega}n$ which we will denote as $\NN_{u,n}$. Note that $g^{-1}=g^*$ and $\omega g=g^n$, for every $g \in \NN_{u,n}$.

Let $\mathcal{G}=(V,E)$ be an undirected graph with $|V|=v$. The {\it adjacency matrix $A$} of $\mathcal{G}$ is a $v\times v$ $(0,1)$-matrix with rows and columns indexed by the elements of $V$ such that $A_{xy}=1$ if and only if $\{x,y\}\in E$. A graph is said to be a \emph{strongly regular graph} with parameters $(v,k,\lambda,\mu)$ if its adjacency matrix $A$ satisfies $A^2=k I_v+\lambda A+\mu(J_v-I_v-A)$. A graph is said to be a {\it Deza graph} with parameters $(v,k,b,a)$, $b>a$, if its adjacency matrix $A$ satisfies $A^2=k I_v+bX +a(J_v-I_v-X)$ for some symmetric $(0,1)$-matrix $X$ with diagonal entries $0$. Of course, a Deza graph for which $X=A$ is a strongly regular graph. Strongly regular graphs are studied in \cite{BVM}. Strongly regular graphs in the context of the more general distance regular graphs and along with their connections to the objects used in the essay are studied together in \cite{BCN,CVL,Tonchev}.

Let $m,n$ be positive integers both not smaller than two. A {\it (square) group divisible design} with parameters $(v,k,m,n,\lambda_1,\lambda_2)$ is a pair $(V,\mathcal{B})$, where $V$ is a finite set of  $v$ elements called points, and $\mathcal{B}$ a collection of $k$-element subsets of $V$ called blocks, where $|\mathcal{B}|=v$, and where the following are satisfied:
\begin{enumerate*}
 \item The point set $V$ can be partitioned into $m$ equinumerous classes of size $n$,
 \item distinct points from the same class are incident with $\lambda_1$ blocks, and
 \item points from distinct classes are incident with $\lambda_2$ blocks.
\end{enumerate*}
A group divisible design is said to be \emph{symmetric} (or to have the \emph{dual property}) if its dual, that is, the incidence structure obtained by interchanging the roles of the points and blocks is again a group divisible design with the same parameters. 
Throughout this paper, we always assume that $k>0$. 

The {\it incidence matrix} of a symmetric group divisible design is a $v\times v$ $(0,1)$-matrix $A$ with rows and columns indexed by the elements of $\mathcal{B}$ and $V$, respectively, such that, for $x\in V$ and $B\in \mathcal{B}$,
\begin{align*}
A_{B,x}=\begin{cases}
1 & \text{ if } x\in B,\\
0 & \text{ if } x\not\in B.
\end{cases}
\end{align*}
 After reordering the elements of $V$ and $\mathcal{B}$ appropriately,  
\begin{align}\label{eq:gdd}
A A^\top=A^\top A=k I_v+\lambda_1(\J_{m,n}-I_v)+\lambda_2(J_v-\J_{m,n}),  
\end{align}
where $\J_{m,n}=I_m\otimes J_n$. Then the $v\times v$ $(0,1)$-matrix $A$ is the incidence matrix of a symmetric group divisible design if and only if  the matrix $A$ satisfies Equation \eqref{eq:gdd}. Thus, we also refer to the $v\times v$ $(0,1)$-matrix $A$ satisfying  \eqref{eq:gdd} as a symmetric group divisible design. A {\it divisible design graph} is a graph such that its adjacency matrix is the incidence matrix of a symmetric group divisible design. The reader may consult the encyclopedic work \cite{BJL} for greater detail of divisible designs and related configurations.

Let $G$ be a multiplicatively written finite group not containing the symbol $0$. A {\it balanced generalized weighing matrix} with parameters $(v, k, \lambda)$ over $G$, denoted $\bgw(v, k, \lambda)$ over $G$, is a matrix $W$ of order $v$ with entries from $\{0\} \cup G$ such that
\begin{enumerate*}
 \item every row of $W$ contains exactly $k$ nonzero entries, and 
 \item for any distinct $i, h \in \{1, 2,\ldots, v\}$, every element of $G$ has exactly $\lambda/|G|$ copies in the multiset $\{W_{ij} W^{-1}_{hj} : 1 \leq j \leq v, W_{ij} \neq 0, W_{hj}\neq 0\}$.
\end{enumerate*}
A BGW with $k=\lambda=v$ is said to be a {\it generalized Hadamard matrix}, and a BGW with $k=v-1,\lambda=v-2$ is said to be a {\it generalized conference matrix}. A BGW, say $W$, over $C_{2n}=\langle g \rangle$, a cyclic group of order $2n$, is skew-symmetric if $W_{ji}=g^n W_{ij}$ whenever $i \neq j$ and $W_{ij} \neq 0$, and $W_{ii}=0$, for every $i$. For an overview of the theory of balanced generalized weighing matrices, please consult \cite{JK} and the references cited therein. The reader may also profitably consult the monograph \cite{IS} for a study of BGWs together with their relations to the objects studied here.

The largest class of known balanced generalized weighing matrices are those with the so-called classical parameters corresponding to the prime power $q$ $$\left(\frac{q^{m+1}-1}{q-1},q^m,q^m-q^{m-1}\right)$$  over a cyclic group $C_n$ in any of the following situations:
\begin{enumerate*}
 \item For any odd $q$, positive integer $m$, and a divisor $n$ of $q-1$;
 \item $q$ is even, $m$ is odd, and $n$ a divisor of $q-1$; and 
 \item $q$ and $m$ are even, and $n$ a divisor of $2(q-1)$.
\end{enumerate*}
Moreover, these matrices can be assumed to be $\omega$-circulant . Furthermore, for odd $q$, if the ratio $\frac{q-1}{n}$ is odd. the matrix is skew-symmetric; and if the ratio is even, the matrix is symmetric. See \cite{arasu,gm2,IS,jt,jt-ii} for the constructions of these objects.

We follow \cite{BI} as our reference for the remaining part of this section. A {\it symmetric association scheme} with {\it $d$-classes} and a finite vertex set $X$, is a set of non-zero $(0,1)$-matrices $A_0, A_1,\ldots, A_d$ with rows and columns indexed by $X$, such that
\begin{enumerate}
\item $A_0=I_{|X|}$;
\item $\sum_{i=0}^d A_i = J_{|X|}$;
\item $A_i^\top=A_i$ for $i\in\{1,\ldots,d\}$; and
\item for any $i$, $j$ and $k$, there exist non-negative integers $p_{i,j}^k$ such that $A_iA_j=\sum_{k=0}^d p_{i,j}^k A_k$.
\end{enumerate}
Note that a graph with adjacency matrix $A$ is strongly regular if and only if $\{I,A,J-I-A\}$ is a symmetric association scheme with $2$-classes. The \defn{intersection matrix} $B_i$ is defined by $(B_i)_{jk}=p_{i,j}^k$. Since each $A_i$ is symmetric, it follows from condition (iv) that the $A_i$'s necessarily commute. The linear space spanned by the $A_i$'s over the complex number field is closed under standard matrix multiplication and, therefore, forms a commutative algebra. We denote this algebra by $\mathcal{A}$, and it is called the \emph{Bose-Mesner algebra}. There exists also a basis of $\mathcal{A}$ consisting of primitive idempotents, say $E_0=(1/|X|)J_{|X|},E_1,\ldots,E_d$. Since  $\{A_0,A_1,\ldots,A_d\}$ and $\{E_0,E_1,\ldots,E_d\}$ are two bases of $\mathcal{A}$, there exist change-of-basis matrices $P$ and $Q$ such that
\begin{align*}
A_j=\sum_{i=0}^d P_{ij}E_i,\quad E_j =\frac{1}{|X|}\sum_{i=0}^{d} Q_{ij}A_i.
\end{align*}   
The matrices $P,Q$ are said to be the \defn{first and second eigenmatrices}, respectively.  

\section{Quasi-balanced weighing matrices} 

\subsection{Definition}
We now introduce the concept of a quasi-balanced weighing matrix for the first time. 
 A weighing matrix $W$ is said to be \emph{quasi-balanced} if the matrix $|W|$ defined by $|W|_{ij}=|W_{ij}|$ commutes with $|W|^\top$, and if the product $|W||W|^\top$ has at most two off-diagonal entries.

Whenever $|W||W|^\top$ has exactly one off-diagonal entry, the $(0,\pm 1)$-weighing matrix $W$ coincides with a balanced weighing matrix. 
Throughout this paper we restrict our interest to the  following four possibilities for the structure of $|W|$:
\begin{enumerate}
\item The adjacency matrix of a strongly regular graph with parameters $(v,k,\lambda,\mu)$. Then $|W|=A$ satisfies $A^\top=A$ and 
 $$
 A^2=k I_v+\lambda_1 A+\mu(J_v-I_v-A);
 $$   
 \item the adjacency matrix of a divisible design graph with parameters $(v,k,m,n,\lambda_1,\lambda_2)$. Then $|W|=A$ satisfies $A^\top=A$ and 
 $$
 A^2=(k-\lambda_1) I_v+(\lambda_1-\lambda_2)\J_{m,n}+\lambda_2 J_v;
 $$   
\item the incidence matrix of a symmetric group divisible design with parameters $(v,k,m,n,\lambda_1,\lambda_2)$. Then $|W|=A$ satisfies 
 $$
 AA^\top=A^\top A=(k-\lambda_1) I_v+(\lambda_1-\lambda_2)\J_{m,n}+\lambda_2 J_v \text{; and}
 $$  
 \item the adjacency matrix of a Deza graph with parameters $(v,k,b,a)$. Then $|W|=A$ satisfies $A^\top=A$ and 
 $$
A^2=k I_v+bX +a(J_v-I_v-X),  
 $$   
 for some symmetric $(0,1)$-matrix $X$ with diagonal entries $0$. 
 \end{enumerate}
 
The concept of \emph{Siamese objects} and the related graph decomposition will be touched upon in what follows. The reader is referred  to \cite{kt,krw} for the appropriate definitions.

\subsection{Constructions for signed strongly regular graphs}
It was shown in \cite{kt} that 
the complete graph $K_{1+q+q^2+q^3}$ on $1+q+q^2+q^3$ vertices can be decomposed into $1+q$ strongly regular graphs sharing $1+q^2$ cliques of size $1+q$. In this section it is shown that the decomposition is singable.
  We give the constructions according to the three cases $q \equiv -1 \text{ (mod $4$)}$, $q \equiv 1 \text{ (mod $4$)}$, and finally $q \equiv 0 \text{ (mod $2$)}$.

We begin with the case that $q \equiv -1 \pmod{4}$.

\begin{theorem}\label{thm:cons1}
Let $q$ be a prime power congruent to $-1\pmod{4}$. Then there is a quasi-balanced weighing matrix $\w(1+q+q^2+q^3,q+q^2)$, say $W$, for which $|W|$ is an
$$\srg(1+q+q^2+q^3,q+q^2,q-1,q+1).$$
Furthermore, $K_{1+q+q^2+q^3}$ decomposes into $1+q$ Siamese signed strongly regular graphs with the above parameters sharing $1+q^2$ cliques of size $1+q$.
\end{theorem}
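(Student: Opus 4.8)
The plan is to graft an explicit signing, built from a Paley conference matrix and a classical balanced generalized weighing matrix, onto the known clique decomposition of $K_{1+q+q^2+q^3}$. First I would recall the geometric model underlying \cite{kt}: identify the $v=1+q+q^2+q^3=(1+q)(1+q^2)$ vertices with the points of $\mathrm{PG}(3,q)$ and fix a Desarguesian line spread $\mathcal S=\{L_0,\dots,L_{q^2}\}$, so the $1+q^2$ lines each carry $1+q$ points and partition the point set. By the decomposition of \cite{kt} these lines are the shared cliques of $1+q$ graphs $\Gamma_0,\dots,\Gamma_q$; ordering the vertices line by line, each adjacency matrix takes the block form $A_t=I_{1+q^2}\otimes(J_{1+q}-I_{1+q})+\sum_{i\ne j}P^{(t)}_{ij}$, where the off-diagonal block $P^{(t)}_{ij}$ is a permutation matrix recording a perfect matching between $L_i$ and $L_j$, and for fixed $i\ne j$ the blocks $P^{(0)}_{ij},\dots,P^{(q)}_{ij}$ decompose $K_{1+q,1+q}$ into $1+q$ matchings. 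A short double count of common neighbours (splitting into the within-line and cross-line cases) confirms the parameters $\srg(v,q+q^2,q-1,q+1)$, with eigenvalues $q+q^2,\,q-1,\,-(q+1)$; note that $q\equiv-1\pmod4$ makes both $\lambda=q-1$ and $\mu=q+1$ even, a necessary parity condition for a real signing.

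Next I would reduce the theorem to a signing problem. We seek $W$ with $|W|=A_t$ and $WW^\top=(q+q^2)I$; writing $W$ in the same block shape, the diagonal blocks become a $(0,\pm1)$ matrix $C$ with $|C|=J_{1+q}-I_{1+q}$ and each off-diagonal block becomes a signed permutation $S^{(t)}_{ij}$ with $|S^{(t)}_{ij}|=P^{(t)}_{ij}$. The diagonal of $WW^\top$ gives $CC^\top+q^2I=(q+q^2)I$, forcing $CC^\top=qI$; that is, $C$ must be a conference matrix of order $1+q$. Because $q\equiv-1\pmod4$, the element $-1$ is a nonsquare in $\mathbb F_q$ and the Paley construction yields a \emph{skew} conference matrix of order $1+q$, which I take for $C$. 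The remaining, and decisive, requirement is that every off-diagonal block of $WW^\top$ vanish, i.e.
\[
C\,(S^{(t)}_{hi})^\top+S^{(t)}_{ih}\,C^\top+\sum_{j\ne i,h}S^{(t)}_{ij}(S^{(t)}_{hj})^\top=O
\qquad(i\ne h).
\]

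The heart of the argument, and the step I expect to be the main obstacle, is producing the signed matchings $S^{(t)}_{ij}$ so that the displayed system holds for all pairs $i\ne h$ simultaneously. My plan is to take each $S^{(t)}_{ij}$ from the negacirculant group $\NN_{2,1+q}$ of signed permutation matrices introduced in the preliminaries, and to let a classical balanced generalized weighing matrix coordinate them: the generalized conference matrix $\bgw(1+q^2,q^2,q^2-1)$ is of classical type (the case $m=1$ with prime power $q^2$) and can be realized over a cyclic group of order $2(1+q)$ identified with $\NN_{2,1+q}$, with integer index $\lambda/|G|=(q-1)/2$; since this ratio is odd for $q\equiv-1\pmod4$, the $\bgw$ is skew-symmetric. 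The defining balance property—each group element occurring equally often among the ratios $W_{ij}W_{hj}^{-1}$—together with the pairing $g\leftrightarrow -g$ inside $\NN_{2,1+q}$ forces the summed cross terms $\sum_{j\ne i,h}S^{(t)}_{ij}(S^{(t)}_{hj})^\top$ to collapse. The two boundary terms $C(S^{(t)}_{hi})^\top+S^{(t)}_{ih}C^\top$ must then be arranged to cancel using the skew-symmetry of $C$ together with the skew relation of the $\bgw$; reconciling the matchings forced by the Desarguesian spread with the homomorphic image of the $\bgw$, and checking that these sign conventions are compatible, is the genuinely delicate computation, and it is here that the hypothesis $q\equiv-1\pmod4$ is used a second time.

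Finally I would assemble the conclusions. Each resulting $W$ satisfies $|W|=A_t$, which is symmetric and hence commutes with its transpose, and $|W||W|^\top=A_t^2=(q+q^2)I+(q-1)A_t+(q+1)(J-I-A_t)$ has precisely the two distinct off-diagonal values $q-1$ and $q+1$; thus $W$ is a quasi-balanced $\w(1+q+q^2+q^3,q+q^2)$ signing the stated strongly regular graph. Since all $1+q$ matrices share the clique support $I_{1+q^2}\otimes(J_{1+q}-I_{1+q})$ and their cross-blocks $P^{(t)}_{ij}$ partition the off-line edges of $K_v$, the signed graphs $\Gamma_0,\dots,\Gamma_q$ are Siamese in the sense of \cite{kt} and realize the claimed decomposition of $K_{1+q+q^2+q^3}$ into $1+q$ signed strongly regular graphs sharing $1+q^2$ cliques of size $1+q$.
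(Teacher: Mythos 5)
Your proposal assembles the same raw materials as the paper's proof---the skew-symmetric $\bgw(1+q^2,q^2,q^2-1)$ over $\NN_{2,1+q}$ with index $(q-1)/2$, a $\w(q+1,q)$ in the diagonal blocks, and the observation that the balance property together with $\sum_{g\in\NN_{2,1+q}}g=O$ annihilates the summed cross terms---but it stops exactly at the step that carries the theorem. You explicitly defer ``the genuinely delicate computation,'' namely the boundary cancellation $C(S^{(t)}_{hi})^\top+S^{(t)}_{ih}C^\top=O$, and your specific choices make it unworkable as stated. With $C$ the Paley skew conference matrix ($C^\top=-C$) and $S_{ih}=N^{a}$ a bare element of $\NN_{2,1+q}$, skew-symmetry of the BGW gives $S_{hi}^\top=-S_{ih}^{-1}$, so the requirement becomes $C=-N^{a}CN^{a}$ for \emph{every} exponent $a$ occurring in $H$; Paley's $C$ does not commute with $N$, and even a negacirculant $C$ satisfies this identity only for special $a$ (since then $-N^{a}CN^{a}=-CN^{2a}$). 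The missing idea in the paper is the back-identity twist: take $C$ to be the \emph{negacirculant} $\w(q+1,q)$ with zero diagonal from \cite{jt} and set the off-diagonal blocks to $H_{ij}R$, where $R$ is the back identity of order $1+q$. Then $gR$ is symmetric for every $g\in\NN_{2,1+q}$, $C$ commutes with $H_{ij}$, and $RC^\top=CR$, so $C(H_{ji}R)^\top+H_{ij}RC^\top=-CH_{ij}R+CH_{ij}R=O$ in two lines; this is precisely where the skewness of $H$, i.e.\ the hypothesis $q\equiv-1\pmod{4}$, is spent. Without the $R$-twist (or an equivalent device) your system of boundary equations has no uniform solution, so the gap is real, not merely an omitted routine check.

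A secondary structural problem: you constrain the signed matchings to agree with the $\mathrm{PG}(3,q)$ spread decomposition of \cite{kt}, which adds an unproved compatibility claim---there is no reason the matchings coming from the BGW coincide with the spread matchings---and is unnecessary. The paper never routes through the geometry: the Siamese decomposition falls out of the algebra by defining $W_\ell$ with off-diagonal blocks $H_{ij}N^{\ell}R$ for $\ell=0,\dots,q$; since $\sum_{\ell=0}^{q}|N^{\ell}|=J_{1+q}$, the supports of the cross blocks partition the bipartite edges between blocks while all $W_\ell$ share the diagonal cliques $J-I$. Relatedly, the strong regularity of $|W|$ should be verified for the constructed matrix rather than inherited from \cite{kt}: the paper computes $|r_i||r_j|^\top=(q+1)J-2|H_{ij}|R$ for $i<j$, which exhibits the two off-diagonal values $q-1$ and $q+1$ directly.
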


\begin{proof}
Let $H$ be the skew-symmetric $\bgw(1+q^2,q^2,q^2-1)$ over the cyclic group $\NN_{2,1+q}$ of order $2+2q$ generated by the nega-shift matrix $N$ of order $1+q$. It follows from Section 3 of  \cite{jt} that there is a negacirculant $\w(q+1,q)$ with zero diagonal, say $C$. Let $W$ be the block matrix defined by

\begin{align*}
W_{ij} & =\begin{cases}
C & \text{if }i=j \text{, and} \\
  H_{ij}R &  \text{if } i \neq j,
\end{cases}
\end{align*}
where $R$ is the back identity matrix of order $1+q$. We claim that $W$ is a quasi-balanced weighing matrix $\w(1+q+q^2+q^3,q+q^2)$.
 
 To see this, let $r_1, \dots, r_v$ be the block rows of $W$. Then 
 $$
 r_ir_i^\top = CC^\top + \sum_{i=1}^{q^2} N^{\ell_i}R(N^{\ell_i}R)^\top,
 $$ 
 for some $\ell_1, \dots, \ell_{q^2} \in \{0,\dots,2q-1\}$. But $CC^\top=qI$ and $N^\ell R(N^\ell R)^\top=I$, for all $\ell$; hence, $r_ir_i^\top=(q+q^2)I$.
 
 Next, if $i<j$, then 
 \begin{align*}
 r_ir_j^\top &= C(H_{ji}R)^\top+H_{ij}RC^\top + \left( \frac{q-1}{2} \right)\sum_{g \in \NN_{2,1+q}}g \\
 &= -CH_{ij}R + CH_{ij}R + \left( \frac{q-1}{2} \right)\sum_{g \in \NN_{2,1+q}}g.
 \end{align*}
 Since the elements of $\NN_{2,1+q}$ sum to $O$, it follows that $r_ir_j^\top = O$.
 
 Next, we show that $\abs{W}$ is an SRG with the stated parameters, where it remains to show that $\lambda=q-1$ and $\mu=q+1$. Note that
 $$
 \abs{W}_{ij}=\begin{cases}
               J-I & \text{if $i=j$, and} \\
               \abs{H_{ij}}R & \text{if $i \neq j$.}
              \end{cases}
 $$
 Then $\abs{r_i}\abs{r_i}^T = (J-I)^2 + q^2I = (q+q^2)I + (q-1)(J-I)$. For $i<j$, we have that
 \begin{align*}
 \abs{r_i}\abs{r_j}^T &= (J-I)(\abs{H_{ji}}R)^T + (\abs{H_{ij}}R)(J-I)^T + (q-1)\sum_{\ell=0}^q \abs{N}^\ell \\
 &= (q+1)J - 2\abs{H_{ij}}R,
 \end{align*}
 which concludes the derivation.
 
 Finally, for $\ell \in \{0, \dots, q\}$, we define $W_\ell$  by
 $$
 (W_\ell)_{ij}=\begin{cases}
                C & \text{if } i=j \text{, and} \\
                H_{ij}N^\ell R & \text{if } i \neq j.
               \end{cases}
 $$
 Then $W=W_0, \dots, W_q$ is the required signed decomposition of $K_{1+q+q^2+q^3}$. This completes the proof. 
\end{proof}

\begin{example}
 A quasi-balanced $\w(40,12)$ is shown in Figure \ref{signed-w-40-12} of the appendix.
\end{example}

We now deal with the case where the prime power is  congruent to $1\pmod{4}$.

\begin{theorem}\label{thm:cons2}
Let $q$ be a prime power congruent to $1\pmod{4}$. Then there is a quasi-balanced Butson weighing matrix $\w(1+q+q^2+q^3,q+q^2)$ over $\R_4$, say $W$, for which $|W|$ is an
$$\srg(1+q+q^2+q^3,q+q^2,q-1,q+1).$$
Furthermore, the complete graph on $v$ vertices decomposes into $q+1$ Siamese strongly regular graphs signed over $\R_4$ with the above parameters all sharing $1+q^2$ cliques of size $q+1$. 
\end{theorem}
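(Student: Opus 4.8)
The plan is to mirror the proof of Theorem~\ref{thm:cons1}, replacing the real skew-symmetric balanced generalized weighing matrix by one whose nonzero entries are fourth roots of unity; this passage to $\R_4$ is forced because, for $q\equiv 1\pmod 4$, the relevant $\bgw(1+q^2,q^2,q^2-1)$ is symmetric rather than skew-symmetric over a group containing only $\pm 1$, so the off-diagonal blocks would no longer cancel. First I would take $H$ to be a $\bgw(1+q^2,q^2,q^2-1)$ over the cyclic group $\NN_{4,1+q}$ of order $4(1+q)$ generated by the $i$-shift matrix $U$ of order $1+q$. Such a matrix exists by the classical construction applied to the prime power $q^2$, since $4(1+q)\mid q^2-1$ and $\lambda/|G|=(q^2-1)/(4(1+q))=(q-1)/4$ is a positive integer precisely when $q\equiv 1\pmod 4$; being a generalized conference matrix, $H$ has zero diagonal and unimodular off-diagonal entries. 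Next I would take $C$ to be an $\omega$-circulant $\w(q+1,q)$ with zero diagonal, as furnished by \cite{jt}, and define the block matrix $W$ by $W_{ij}=C$ if $i=j$ and $W_{ij}=H_{ij}R$ if $i\neq j$, where $R$ is the back-identity matrix of order $1+q$.

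Then I verify $WW^{*}=(q+q^2)I$. Writing $r_i$ for the $i$-th block row, the diagonal blocks give $r_ir_i^{*}=CC^{*}+\sum_{t\neq i}(H_{it}R)(H_{it}R)^{*}=qI+q^2I$, using $CC^{*}=qI$ and that each $H_{it}R$ is unitary. For $i\neq j$ the three kinds of terms yield
\[
r_ir_j^{*}=C(H_{ji}R)^{*}+(H_{ij}R)C^{*}+\tfrac{q-1}{4}\sum_{g\in\NN_{4,1+q}}g,
\]
where the last term collects the $q^2-1$ common nonzero positions through the defining property of $H$; since the elements of $\NN_{4,1+q}$ sum to $O$, that term vanishes and the first two are arranged to cancel. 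Passing to absolute values then runs exactly as in Theorem~\ref{thm:cons1}: $\abs{W}_{ii}=J-I$ and $\abs{W}_{ij}=\abs{H_{ij}}R$ is a permutation matrix, whence $\abs{r_i}\abs{r_i}^{\top}=(q+q^2)I+(q-1)(J-I)$ and $\abs{r_i}\abs{r_j}^{\top}=(q+1)J-2\abs{H_{ij}}R$, exhibiting the parameters $\srg(1+q+q^2+q^3,q+q^2,q-1,q+1)$. Finally, setting $(W_\ell)_{ij}=C$ for $i=j$ and $H_{ij}U^{\ell}R$ for $i\neq j$ with $\ell\in\{0,\dots,q\}$ produces $q+1$ graphs signed over $\R_4$; since $\sum_{\ell=0}^{q}\abs{U}^{\ell}R=JR=J$ decomposes each off-diagonal all-ones block into $q+1$ disjoint permutations while the diagonal blocks $J-I$ supply the $1+q^2$ shared cliques of size $q+1$, these constitute the required Siamese decomposition of $K_{1+q+q^2+q^3}$.

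The main obstacle is the off-diagonal cancellation $C(H_{ji}R)^{*}+(H_{ij}R)C^{*}=O$, which is exactly where working over $\R_4$ is essential. For $q\equiv 3\pmod 4$ this followed at once from the real skew-symmetry $H_{ji}=-H_{ij}$ together with the negacirculant structure of $C$; here I must instead isolate the precise Hermitian-type symmetry of $H$ over $\NN_{4,1+q}$ and the matching conjugate-transpose behaviour of $C$ that together force $C(H_{ji}R)^{*}=-(H_{ij}R)C^{*}$, carefully tracking the complex conjugation introduced by the fourth roots of unity and the reflection identity $RUR=U^{\top}$. Confirming that the classical construction of \cite{jt} can be taken with both the required block symmetry for $H$ and a compatible $C$ is the one point that needs genuine care; the remaining steps are routine verifications paralleling Theorem~\ref{thm:cons1}.
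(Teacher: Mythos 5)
There is a genuine gap, and it sits exactly where you flagged it: the off-diagonal cancellation $C(H_{ji}R)^{*}+(H_{ij}R)C^{*}=O$ is deferred to an unspecified ``Hermitian-type symmetry'' of $H$ over $\NN_{4,1+q}$ together with a ``compatible'' $C$, and neither is established nor citable. In fact no uniform symmetry of the required kind exists: by the symmetry criterion quoted in the paper, the classical $\bgw(1+q^2,q^2,q^2-1)$ over the cyclic group of order $4(1+q)$ is skew-symmetric when the ratio $(q^2-1)/(4(1+q))=(q-1)/4$ is odd, i.e.\ $q\equiv 5\pmod 8$, and symmetric when $q\equiv 1\pmod 8$, so a single identity cannot cover all $q\equiv 1\pmod 4$. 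Worse, the cancellation fails entrywise in your setup. Using $RxR=x^{\top}$ for $x$ in the algebra generated by $U$ and $g^{*}=g^{-1}$, the cross term with $H_{ji}=\epsilon H_{ij}=\epsilon g$ ($\epsilon=\pm1$) reduces to $(\epsilon C\overline{g}+g\overline{C})R$, whose nonzero entries are sums $\epsilon c\overline{u}+u\overline{c}$ with $c$ an entry of $C$ and $u$ of $g$, up to wrap-around phases; in the symmetric case vanishing forces every such product $c\overline{u}$ to be purely imaginary, and in the skew case purely real. Since the blocks $H_{ij}$ of the classical BGW realize both some $g$ and $ig$, the two requirements $c\overline{u}\in i\mathbb{R}$ and $c\overline{iu}=-ic\overline{u}\in i\mathbb{R}$ (respectively the real versions) are incompatible, so your construction as stated cannot work without introducing an extra phase somewhere --- and no choice of ``compatible'' $i$-circulant $C$ is known to repair this. (Also minor: the $\omega$-circulant $\w(q+1,q)$ is furnished by \cite{jt-ii}; \cite{jt} supplies the negacirculant one.)

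The paper's actual fix is much simpler and avoids your difficulty entirely: it does \emph{not} change the group. It keeps $H$ as the (now symmetric, for $q\equiv 1\pmod 4$) $\bgw(1+q^2,q^2,q^2-1)$ over the same real group $\NN_{2,1+q}$ and keeps the real negacirculant $C$ from Theorem~\ref{thm:cons1}, making exactly one modification: the diagonal blocks become $iC$. Then the Hermitian adjoint conjugates the scalar, so the cross terms give $(iC)(H_{ji}R)^{*}+(H_{ij}R)(iC)^{*}=iCH_{ij}R-iCH_{ij}R=O$, using $H_{ji}=H_{ij}$ and the same real commutation identities already verified in Theorem~\ref{thm:cons1}; this works precisely because $H_{ij}R$ and $C$ are real, the factor $i$ supplying the sign that skew-symmetry supplied before. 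The resulting $W$ has entries in $\{0,\pm1,\pm i\}$ with only the shared diagonal part (the $1+q^2$ cliques) imaginary, matching the theorem's decomposition claim, whereas your $W$ would have genuinely complex off-diagonal blocks. Your absolute-value and decomposition computations are fine in outline, but they rest on the unproven cancellation, so the proposal does not prove the theorem.
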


\begin{proof}
The construction is similar to the construction in Theorem~\ref{thm:cons1} with one difference. The BGW matrix $H$ in this case is symmetric, so we need to multiply the matrix $C$ by the complex $i$ in order to make the construction work. This  means that the matrix $W$ has entries in $\{0,\pm 1, \pm i\}$. The rest of the construction is the same. The graph decomposition property is also similar except that the shared part now has elements in $\{0,\pm i\}$.
\end{proof}

The final construction relates to the even prime powers. In what follows, let $\omega$ be a primitive complex $(q-1)$-st root of unity, and let $U$ be the $\omega$-shift matrix of order $1+q$, i.e. the generator of $\NN_{q-1,1+q}$.

\begin{theorem}\label{pp}
Let $q$ be an even prime power. Then there is a quasi-balanced weighing matrix $\w(1+q+q^2+q^3,1+q^3)$ of quaternions, say $W$, for which the off-diagonal entries of $\abs{W}\abs{W}^\top$ consist of the two values $q^3-q^2$ and $q^3-q^2+2$.
\end{theorem}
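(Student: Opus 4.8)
The plan is to mimic the block construction of Theorem~\ref{thm:cons1}, but to rebalance the roles of the small weighing matrix and the balanced generalized weighing matrix so that the diagonal blocks are light and the off-diagonal blocks carry the weight. Concretely, I would take $H$ to be a Hermitian generalized conference matrix, i.e.\ a Hermitian $\bgw(1+q^2,q^2,q^2-1)$ realized over the cyclic group $\NN_{q-1,1+q}$ of order $q^2-1$, so that each nonzero entry is a power $U^{t}$ of the $\omega$-shift $U$ of order $1+q$ and $H_{ki}=H_{ik}^*$. I would take $C$ to be an $\omega$-circulant (hence a polynomial in $U$) weighing matrix $\w(1+q,q)$ of quaternions that is skew-Hermitian, $C^*=-C$, with zero diagonal and $CC^*=qI$. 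I would then define the $(1+q^2)\times(1+q^2)$ block matrix $W$, with blocks of order $1+q$, by $W_{ii}=I_{1+q}$ and $W_{ij}=H_{ij}C$ for $i\neq j$. Since each block-row has one diagonal block of weight $1$ and $q^2$ off-diagonal blocks of weight $q$, the weight is $1+q^3$, as required.

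For the weighing property $WW^*=(1+q^3)I$ I would compute the block inner products of the block rows $r_1,\dots,r_{1+q^2}$. The diagonal case is immediate: $r_ir_i^*=I+\sum_{j\neq i}H_{ij}CC^*H_{ij}^*=I+q\sum_{j\neq i}H_{ij}H_{ij}^*=(1+q^3)I$, using $CC^*=qI$ and the unitarity of the monomials $H_{ij}$. For $i\neq k$ the bulk terms collapse, via the defining property of the $\bgw$ (with $\lambda/\abs{G}=1$), to $q\sum_{g\in\NN_{q-1,1+q}}g$; the key lemma here is that this group sum vanishes, since writing $g=\omega^aU^b$ factors it as $\bigl(\sum_{a=0}^{q-2}\omega^a\bigr)\bigl(\sum_{b=0}^{q}U^b\bigr)$, whose first factor is $0$ whenever $q-1\geq2$. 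The two surviving cross terms are $C^*H_{ki}^*+H_{ik}C$; here I would use the Hermitian symmetry $H_{ki}^*=H_{ik}$ together with the facts that $C$ commutes with $U$ and $C^*=-C$ to rewrite this as $H_{ik}(C^*+C)=O$. Thus $r_ir_k^*=O$ and $W$ is a quaternion $\w(1+q+q^2+q^3,1+q^3)$.

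For the absolute-value product I would pass to $|W|$, whose diagonal blocks are $I$ and whose off-diagonal blocks are $\Pi_{ij}(J-I)$ with $\Pi_{ij}=\abs{H_{ij}}=\sigma^{s_{ij}}$, where $\sigma=\abs{U}$ is the underlying cyclic permutation of order $1+q$ and $s_{ij}$ records the shift-part of $H_{ij}$. A blockwise computation gives, on the diagonal blocks, $(1+q^2)I+(q^3-q^2)J$; the bulk sum $\sum_{j\neq i,k}\Pi_{ij}\Pi_{kj}^\top$ evaluates to $(q-1)J$ because, by the $\bgw$ property, the shift-differences hit each residue modulo $1+q$ exactly $q-1$ times and $\sum_{s=0}^{q}\sigma^s=J$. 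On an off-diagonal block one obtains $(q^3-q^2+2)J-\Pi_{ki}^\top-\Pi_{ik}$; applying the Hermitian identity $\Pi_{ki}^\top=\Pi_{ik}$ turns this into $(q^3-q^2+2)J-2\Pi_{ik}$, whose entries are exactly $q^3-q^2+2$ (off the permutation) and $q^3-q^2$ (on it). Collecting the two kinds of block contributions shows the off-diagonal entries of $\abs{W}\abs{W}^\top$ take precisely the two values $q^3-q^2$ and $q^3-q^2+2$; the same commutation $\Pi_{ik}(J-I)=(J-I)\Pi_{ik}$ shows $\abs{W}$ is symmetric, so $\abs{W}$ and $\abs{W}^\top$ commute and $W$ is genuinely quasi-balanced.

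The main obstacle is the existence of the ingredient $C$: a skew-Hermitian, $\omega$-circulant weighing matrix $\w(1+q,q)$ with zero diagonal for every even prime power $q$. The constraints $C^*=-C$, $CC^*=qI$, and $CU=UC$ are exactly what force the passage from $\mathbb{C}$ to the quaternions, and producing such a $C$ (and verifying it is compatible with a Hermitian realization of $H$) is where the real work lies; I would expect to build it as an explicit polynomial in $U$ with quaternion coefficients, solving the diagonal-vanishing and norm conditions. A secondary point needing care is the residual case $q=2$, where $q-1=1$ and the group sum $\sum_{g}g$ no longer vanishes (it equals $J$), so this case must be handled separately or by a direct construction.
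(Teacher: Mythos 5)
Your overall architecture matches the paper's (block matrix over a $\bgw(1+q^2,q^2,q^2-1)$ on $\NN_{q-1,1+q}$, light diagonal blocks, weight-$q$ off-diagonal blocks, bulk cross terms killed by the vanishing group sum), but the two ingredients you rely on for the surviving cross terms do not exist as specified, and the second is not merely ``real work'' but an impossibility. First, your cancellation $C^*H_{ki}^*+H_{ik}C=H_{ik}(C^*+C)$ needs $C$ (equivalently $C^*$) to commute with every $H_{ik}=\omega^aU^b$, hence with the scalar $\omega$. For $q\geq 4$ the root $\omega$ is non-real, and the centralizer of $\mathbb{C}$ in the quaternions is $\mathbb{C}$ itself, so the coefficients of your ``polynomial in $U$ with quaternion coefficients'' are forced to be complex --- the constraints do not ``force the passage to the quaternions''; they forbid it. But a \emph{complex} skew-Hermitian $C$ of order $q+1$ with $CC^*=qI$ is normal with all eigenvalues in $\{i\sqrt{q},-i\sqrt{q}\}$, so its trace is $i\sqrt{q}(p-m)$ with $p+m=q+1$ odd, hence nonzero --- contradicting the zero diagonal. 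So the skew-Hermitian $\omega$-circulant $\w(1+q,q)$ with zero diagonal that your whole proof hinges on cannot exist for any even $q\geq 4$ (and you already noted $q=2$ breaks the group-sum step). The Hermitian realization of $H$ is a second unsupported assumption: the classical constructions cited in the paper give a \emph{symmetric} BGW for even $q$, not a Hermitian one.

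The paper evades exactly this obstruction by relocating the ``skewness'' from $C$ to the diagonal: it keeps the known complex $\omega$-circulant $C=\w(q+1,q)$ of \cite{jt-ii} and the symmetric $H$, sets $W_{ii}=kI$ for a quaternion unit $k$ and $W_{ij}=CH_{ij}R$ with $R$ the back identity, and cancels the two cross terms via the conjugation identities $XR=R\bar X$ (for $\omega$-circulants $X$) and $Yk=k\bar Y$ (for complex $Y$), giving $k(CH_{ij}R)^*+(CH_{ij}R)k^*=k\bar H_{ij}\bar CR-k\bar C\bar H_{ij}R=0$ since $\bar C$ and $\bar H_{ij}$ are both polynomials in $\bar U$. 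Because the quaternion sits only in the scalar diagonal blocks $kI$, no trace/eigenvalue obstruction arises. Your blockwise computation of $\abs{W}\abs{W}^\top$ (two off-diagonal values $q^3-q^2$ and $q^3-q^2+2$ via ``$J$ minus a permutation'' patterns) is sound in outline and is in fact spelled out more fully than in the paper, but it cannot rescue the argument: without an admissible $C$ the matrix $W$ is never constructed.
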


\begin{proof}
Let $H$ be the symmetric $\bgw(1+q^2,q^2,q^2-1)$ over the cyclic group $\NN_{q-1,1+q}$ of order $q^2-1$ generated by $U$ given above. By {\cite{jt-ii}}, there is an $\omega$-circulant $W(q+1,q)$ with zero diagonal, say $C$. Let $k$ be a quaternion unit $k^2=-1$, and let $W$ be the block matrix defined by
\begin{align*}
W_{ij} & =\begin{cases}
kI_q & \text{if } i=j \text{, and} \\  
 C H_{ij}R &  \text{if }i\ne j.
\end{cases}
\end{align*}

Let $r_1, \dots, r_v$ be the block rows of $W$. Then, since $(CH_{ij}R)(CH_{ij}R)^*=qI$, we have that $r_ir_i^*=(1+q^3)I$.

Next, noting that $XR=R\bar X$ and $Yk=k\bar{Y}$ where $X,Y\in\{g,C\}$ for every $g \in \NN_{q-1,1+q}$, it follows that $k(CH_{ij}R)^*+(CH{ij}R)k^*=k\bar{H_{ij}}{\bar{C}}R-k\bar{C}\bar{H{ij}}R=0.$ As before, then, it follows that $r_ir_j^*=O$.
\end{proof}

\begin{example}
Let $q=4$ in Theorem~\ref{thm:cons2}. Then $H$ is the symmetric $\bgw(17,16,15)$ over the group $G$ of order $15$ generated by the  $\omega$-shift matrix
$$U=\left(\begin{matrix} 0&1&0&0&0\\0&0&1&0&0\\0&0&0&1&0\\0&0&0&0&1\\\omega&0&0&0&0\end{matrix}\right),$$ where $\omega$ is a primitive $3$rd root of unity. The matrix $C$ can be taken to be
$$C=\left(\begin{matrix} 0 &\omega^2&\omega&\omega^2&\omega^2\\1 &0&\omega^2&\omega&\omega^2\\1&1&0&\omega^2&\omega\\\omega^2&1&1&0&\omega^2\\1&\omega^2&1&1&0\end{matrix}\right).$$
The matrix $W$ is then a quasi-balanced weighing matrix $\w(85,65)$ of quaternions, and the off-diagonal entries of $\abs{W}\abs{W}^\top$ consist of the two values $48$ and $50$.
\end{example}

\begin{remark} The constructed matrices in Theorem \ref{pp} having constant diagonal can be considered as the adjacency matrices of an $\srg(1+q+q^2+q^3,q^3,q^3-q^2,q^3-q^2)$ upon setting the nonzero entries to $1$ and the diagonal to $0$.
\end{remark}

\subsection{Constructions for signed symmetric group divisible designs}\label{sec:GHC}

We now present several constructions of signed divisible designs.

\begin{theorem}\label{thm:qbsgdd1}
If there exist a $\bgw(m,m,m)$ over the cyclic group of order $2n$  and a $\bgw(n,k,\lambda)$ over $\{1,-1\}$, then there exists a quasi-balanced weighing matrix $\w(mn,mk)$, say $W$, for which $|W|$ is an
$$
\sgdd\left(mn,mk,m,n,m\lambda,\frac{m}{n}k^2\right)
$$
 and satisfies the property that $|W|\J_{m,n}=\J_{m,n}|W|=kJ_{mn}$. 
\end{theorem}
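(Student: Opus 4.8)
The plan is to build $W$ as an $m\times m$ array of $n\times n$ blocks directly from the two ingredients. First I would identify the cyclic group of order $2n$ with the nega-shift group $\NN_{2,n}=\langle N\rangle$, so that each entry $G_{ij}$ of the given $\bgw(m,m,m)$ (which is a generalized Hadamard matrix, since $k=\lambda=v=m$, and in particular has no zero entries) is realized as an $n\times n$ signed permutation matrix. Writing $B$ for the given $\bgw(n,k,\lambda)$ over $\{1,-1\}$, I would define $W$ by its $(i,j)$ block $G_{ij}B$. Since $G_{ij}$ has $\pm1$ entries and $B$ has entries in $\{0,\pm1\}$, this $W$ is a $(0,\pm1)$-matrix of order $mn$.

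For the weighing property, the $(i,j)$ block of $WW^\top$ is $\sum_{l}G_{il}BB^\top G_{jl}^\top=k\sum_l G_{il}G_{jl}^{-1}$, using $BB^\top=kI_n$ and $G_{jl}^\top=G_{jl}^{-1}$. For $i=j$ this is $mkI_n$; for $i\neq j$ the balance condition on $G$ makes $\{G_{il}G_{jl}^{-1}\}$ contain every element of $\NN_{2,n}$ equally often, and since $\sum_{g\in\NN_{2,n}}g=O$ (the terms $N^a$ and $N^{a+n}=-N^a$ cancel in pairs), the block vanishes. Hence $WW^\top=mkI_{mn}$.

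Next I would analyze $|W|$. Because $G_{ij}$ is monomial, $|G_{ij}B|=|G_{ij}|\,|B|=Q^{b_{ij}}D$, where $Q=|N|$ is the cyclic shift of order $n$, $D=|B|$ is the incidence matrix of the symmetric $2$-$(n,k,\lambda)$ design underlying $B$, and $b_{ij}\in\{0,\dots,n-1\}$. Since each block is a row-permutation of $D$, it has constant row and column sums $k$, which immediately yields $|W|\J_{m,n}=\J_{m,n}|W|=kJ_{mn}$. To obtain the design equation I would compute $|W||W|^\top$ block by block: diagonal blocks give $m(k-\lambda)I_n+m\lambda J_n$, matching the diagonal block of the target $\sgdd$ matrix, while off-diagonal blocks reduce, via $DD^\top=(k-\lambda)I_n+\lambda J_n$, to $(k-\lambda)\sum_l Q^{b_{il}-b_{jl}}+m\lambda J_n$. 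The key point is that the balance of $G$ over $\NN_{2,n}$, pushed through the projection $\NN_{2,n}\to\NN_{2,n}/\{\pm I\}\cong C_n$ (that is, reduction of exponents mod $n$), becomes a balance over $C_n$ in which each residue occurs $m/n$ times; this makes $\sum_l Q^{b_{il}-b_{jl}}=\tfrac{m}{n}J_n$, and the symmetric-design relation $\lambda(n-1)=k(k-1)$ then collapses the off-diagonal block to $\tfrac{m}{n}k^2J_n$, exactly the between-groups entry. Thus $|W||W|^\top=(mk-m\lambda)I+(m\lambda-\tfrac{m}{n}k^2)\J_{m,n}+\tfrac{m}{n}k^2J$.

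Finally, the quasi-balanced conditions and the symmetry $AA^\top=A^\top A$ remain. The matrix $|W||W|^\top$ has the single diagonal value $mk$ and exactly the two off-diagonal values $m\lambda$ and $\tfrac{m}{n}k^2$, so it has at most two off-diagonal entries. To establish that $|W|^\top|W|$ equals the same matrix — which simultaneously gives the commuting (hence quasi-balanced) condition and confirms that $|W|$ is a genuine symmetric group divisible design via \eqref{eq:gdd} — I would repeat the block computation with columns in place of rows, now using $B^\top B=kI_n$, $D^\top D=(k-\lambda)I_n+\lambda J_n$, and $D^\top J_nD=k^2J_n$. I expect the main obstacle to be exactly this transpose computation: unlike the row case it cannot be reduced to $DD^\top$ alone, and it relies on the equidistribution of the exponent differences $b_{lj}-b_{li}$ mod $n$, i.e.\ on the column balance of $G$. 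I would handle this by invoking that the transpose of a generalized Hadamard matrix over an abelian group is again one, so that these differences are equidistributed exactly as in the row case.
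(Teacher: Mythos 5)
Your proposal is correct and follows essentially the same route as the paper's proof: the identical block construction $W_{ij}=N^{c_{ij}}C_2$, the same blockwise computations of $WW^\top$, $|W||W|^\top$ and $|W|^\top|W|$ using $\sum_{g\in\NN_{2,n}}g=O$ and the projection of the balance condition to residues mod $n$, and the same use of $k(k-1)=\lambda(n-1)$ to match the two off-diagonal values. Your explicit appeal to the fact that the transpose of a generalized Hadamard matrix over an abelian group is again one actually fills in a step the paper glosses over (its $|W|^\top|W|$ computation silently swaps the roles of row and column indices), but this is a refinement of the same argument rather than a different approach.
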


\begin{proof}
Let $C_1$ be a $\bgw(m,m,m)$ over the cyclic group of order $2n$ generated by the negacirculant matrix $N$ of order $n$, and $C_2$ be a $\bgw(n,k,\lambda)$ over $\{1,-1\}$. For the matrix $C_1$, write the $(i,j)$-entry of $C_1$ as $N^{c_{ij}}$ for $i,j\in\{1,\ldots,m\}$. 

Define an $m \times m$ $(0,\pm1)$ block matrix $W$ by $W_{ij}=N^{c_{ij}}C_2$. Then $W$ is a quasi-balanced weighing matrix. Indeed, for $i,j\in\{1,\ldots,m\}$, 
\begin{align*}
(WW^\top)_{ij}&=\sum_{1\leq \ell \leq m} (N^{c_{i\ell}}C_2)(N^{c_{j\ell}}C_2)^\top\\
&=\sum_{1\leq \ell \leq m} N^{c_{i\ell}}C_2C_2^\top N^{-c_{j\ell}}\\
&=k\sum_{1\leq \ell \leq m} N^{c_{i\ell}-c_{j\ell}} \\
&=\begin{cases}
m kI_n & \text{if $i=j$, and} \\
O & \text{if }i\neq j,  
\end{cases}
\end{align*}
which shows that $W$ is a weighing matrix of order $mn$ and weight $mk$.  
Since $W_{ij}=N^{c_{ij}}C_2$, $|W|_{ij}=P^{c_{ij}}|C_2|$ where $P$ is the circulant  matrix with first row $(010\cdots0)$ of order $n$, we have
\begin{align*}
(|W||W|^\top)_{ij}&=\sum_{1\leq \ell \leq m} (P^{c_{i\ell}}|C_2|)(P^{c_{j\ell}}|C_2|)^\top\\
&=\sum_{1\leq \ell \leq m} P^{c_{i\ell}}|C_2|\cdot|C_2|^\top P^{-c_{j\ell}}\\
&=\sum_{1\leq \ell \leq m} P^{c_{i\ell}}((k-\lambda)I+\lambda J) P^{-c_{j\ell}}\\
&=(k-\lambda)\sum_{1\leq \ell \leq m} P^{c_{i\ell}-c_{j\ell}}+\lambda \sum_{1\leq \ell \leq m} P^{c_{i\ell}} J P^{-c_{j\ell}}\\
&=(k-\lambda)\sum_{1\leq \ell \leq m} P^{c_{i\ell}-c_{j\ell}}+\lambda \sum_{1\leq \ell \leq m}J \\
&=\begin{cases}
m((k-\lambda)I+\lambda J) & \text{if $i=j$, and} \\
(\frac{m}{n}(k-\lambda)+m\lambda) J & \text{if }i\neq j.  
\end{cases}
\end{align*}
On the other hand,  
\begin{align*}
(|W|^\top|W|)_{ij}&=\sum_{1\leq \ell \leq m} (P^{c_{i\ell}}|C_2|)^\top (P^{c_{j\ell}}|C_2|)\\
&=\sum_{1\leq \ell \leq m} |C_2|^\top P^{-c_{i\ell}+c_{j\ell}}|C_2|\\
&=\begin{cases}
m|C_2|^\top|C_2| & \text{if $i=j$, and} \\
\frac{m}{n}|C_2|^\top J |C_2| & \text{if }i\neq j. 
\end{cases}\\
&=\begin{cases}
m((k-\lambda)I+\lambda J) & \text{if $i=j$, and} \\
\frac{m}{n}k^2 J & \text{if }i\neq j.   
\end{cases}
\end{align*}
By the equality $k(k-1)=\lambda(n-1)$, we have $|W||W|^\top=|W|^\top |W|$. 
Therefore $|W|$ is a symmetric group divisible design with parameters $(mn,mk,m,n,m\lambda,\frac{m}{n}k^2)$. 
The property that $|W|\J_{m,n}=\J_{m,n}|W|=kJ_{mn}$ is easy to see. 
\end{proof}

\begin{example}
Let $H$ be a $\mathrm{GH}(2^n,2^{2nt-n})$ over the cyclic group $C_{2^n}$ generated by the nega-shift matrix of order $2^{n-1}$, and let $t$ and $n$ be positive integers. Let $W$ be a weighing matrix $W(2^{n-1},2^{n-1}-1)$. Then the matrix $[H_{ij}W]$ is a quasi-balanced weighing matrix whose absolute value matrix is the incidence matrix of a $$GDD(2^{2nt+n-1},2^{2nt}(2^{n-1}-1),2^{2nt},2^{n-1},2^{2nt}(2^{n-1}-2),2^{2nt-n+1}(2^{2n-2}-2^n+1))).$$
\end{example}

\begin{theorem}\label{thm:qbsgdd2}
If there exist a $\bgw(m+1,m,m-1)$ over the cyclic group of order $2n$ and a $\bgw(n,k,\lambda)$ over $\{1,-1\}$, then there exists a quasi-balanced weighing matrix $\w((m+1)n,mk)$, say $W$, for which $|W|$ is an
$$
\sgdd\left((m+1)n,m k,m+1,n,m\lambda,\frac{m-1}{n}k^2\right)
$$ 
and satisfies the property that $|W|\J_{m,n}=\J_{m,n}|W|=k(J_{mn}-I_{m}\otimes J_n)$.
\end{theorem}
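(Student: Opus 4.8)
The plan is to mirror the construction of Theorem~\ref{thm:qbsgdd1}, the only genuine change being that the outer matrix is now a generalized conference matrix rather than a generalized Hadamard matrix, so that it carries a single zero in each row and each column. First I would normalize the $\bgw(m+1,m,m-1)$, call it $C_1$, over $\NN_{2,n}=\langle N\rangle$ so that its zeros lie on the main diagonal; its nonzero $(i,j)$-entry is then written $N^{c_{ij}}$, and its support is $J_{m+1}-I_{m+1}$. Writing $C_2$ for the $\bgw(n,k,\lambda)$ over $\{1,-1\}$ (so that $C_2C_2^\top=kI_n$), I would define the $(m+1)\times(m+1)$ block matrix $W$ by $W_{ij}=N^{c_{ij}}C_2$ when $(C_1)_{ij}\neq 0$ and $W_{ij}=O$ when $i=j$.

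The verification that $W$ is a weighing matrix $\w((m+1)n,mk)$ then runs as before. A diagonal block $(WW^\top)_{ii}$ sums $N^{c_{i\ell}}C_2C_2^\top N^{-c_{i\ell}}=kI_n$ over the $m$ nonzero positions $\ell$ of row $i$, giving $mkI_n$. For $i\neq j$ one obtains $k\sum_\ell N^{c_{i\ell}-c_{j\ell}}$, the sum running over the common support of rows $i,j$; the defining property of the $\bgw(m+1,m,m-1)$ makes each element of $\NN_{2,n}$ occur exactly $(m-1)/(2n)$ times in this multiset, and since $N^n=-I$ the full group sum $\sum_{g\in\NN_{2,n}}g$ vanishes, so the off-diagonal blocks are $O$.

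Next I would pass to absolute values, where $|W|_{ij}=P^{c_{ij}}|C_2|$ on the support and $O$ on the diagonal, $P=|N|$ being the circulant shift of order $n$. Here $|C_2|$ is the incidence matrix of a symmetric $(n,k,\lambda)$-design, so $|C_2||C_2|^\top=|C_2|^\top|C_2|=(k-\lambda)I_n+\lambda J_n$ and both its row and column sums equal $k$. Computing $|W||W|^\top$ I expect diagonal blocks $m((k-\lambda)I_n+\lambda J_n)$ and, for $i\neq j$, the expression $(k-\lambda)\sum_\ell P^{c_{i\ell}-c_{j\ell}}+\lambda(m-1)J_n$. The crucial bookkeeping step is that taking absolute values collapses the pair $N^a$ and $N^{a+n}=-N^a$ onto the single power $P^a$, so the uniform distribution over $\NN_{2,n}$ becomes a uniform distribution over $\langle P\rangle$ with multiplicity $(m-1)/n$; hence $\sum_\ell P^{c_{i\ell}-c_{j\ell}}=\tfrac{m-1}{n}J_n$ and the off-diagonal block equals $(m-1)\big(\tfrac{k-\lambda}{n}+\lambda\big)J_n$. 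Invoking the design identity $k(k-1)=\lambda(n-1)$ simplifies $\tfrac{k-\lambda}{n}+\lambda=\tfrac{k^2}{n}$, producing $\tfrac{(m-1)k^2}{n}J_n$, exactly the parameter $\lambda_2$. The computation of $|W|^\top|W|$ is symmetric: using that the transpose of a $\bgw$ is again a $\bgw(m+1,m,m-1)$ gives the same multiplicities for the powers $P^{c_{\ell j}-c_{\ell i}}$, and $|C_2|^\top J_n|C_2|=k^2J_n$ follows from the column sums, yielding identical diagonal and off-diagonal blocks. Assembling these gives $|W||W|^\top=|W|^\top|W|=mkI_{(m+1)n}+m\lambda(\J_{m+1,n}-I_{(m+1)n})+\tfrac{m-1}{n}k^2(J_{(m+1)n}-\J_{m+1,n})$, i.e.\ the asserted $\sgdd$. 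Finally, the identity $|W|\J_{m+1,n}=\J_{m+1,n}|W|=k(J_{(m+1)n}-\J_{m+1,n})$ follows at once from $|C_2|J_n=J_n|C_2|=kJ_n$ together with the support $J_{m+1}-I_{m+1}$ of the normalized conference matrix $C_1$.

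I expect the main obstacle to be bookkeeping rather than conceptual: namely, keeping the signed count over $\NN_{2,n}$ (which cancels, yielding the weighing property) and the absolute-value count over $\langle P\rangle$ (which accumulates to $J_n$) cleanly separated, and ensuring that the reduced overlap multiplicity $m-1$ forced by the zero diagonal of the conference matrix---in place of the full overlap $m$ of the Hadamard case in Theorem~\ref{thm:qbsgdd1}---is tracked consistently through both the $i\neq j$ blocks and the design identity that fixes $\lambda_2$.
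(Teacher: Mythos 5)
Your proposal is correct and takes essentially the same route as the paper, whose proof simply defines $W_{ij}=(1-\delta_{ij})N^{c_{ij}}C_2$ and declares the rest similar to Theorem~\ref{thm:qbsgdd1}; your detailed verification (common support of size $m-1$ in place of $m$, the collapse of the uniform distribution over $\NN_{2,n}$ onto $\langle P\rangle$ with multiplicity $\frac{m-1}{n}$, and the identity $k+(n-1)\lambda=k^2$ producing $\lambda_2=\frac{m-1}{n}k^2$) is exactly what that abbreviated proof entails, and you correctly read the theorem's $\J_{m,n}$ as the intended $\J_{m+1,n}$. One pedantic note: it is the entrywise-inverse transpose of a $\bgw$, not the transpose itself, that is again a $\bgw$ with the same parameters, but since the group here is cyclic (abelian) this yields precisely the column-balance property your computation of $|W|^\top|W|$ requires.
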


\begin{proof}
Let $C_1$ be a $\bgw(m+1,m,m-1)$ over the cyclic group of order $2n$ generated by the negacirculant matrix $N$ of order $n$, and $C_2$ be a $\bgw(n,k,\lambda)$ over $\{1,-1\}$. For the matrix $C_1$, write the $(i,j)$-entry of $C_1$ as $(1-\delta_{ij})N^{c_{ij}}$ for $i,j\in\{1,\ldots,m+1\}$. Define an $(m+1) \times (m+1)$ $(0,\pm1)$ block matrix $W$ by $W_{ij}=(1-\delta_{ij})N^{c_{ij}}C_2$. Then $W$ is the desired quasi-balanced weighing matrix. The rest of the proof is similar to that of Theorem~\ref{thm:qbsgdd1}. 
\end{proof}

\begin{corollary}
 If there exists a $\bgw(n,k,\lambda; \{-1,1\})$, then there exists a quasi-balanced $\w((m+1)n,mk)$ where $\abs{W}$ is an
 $$
 \sgdd\left((m+1)n,m k,m+1,n,m\lambda,\frac{m-1}{n}k^2\right),
 $$
 for infintely many primes $m$.
\end{corollary}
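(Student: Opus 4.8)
The plan is to read this statement as a direct specialization of Theorem~\ref{thm:qbsgdd2}. Since a $\bgw(n,k,\lambda)$ over $\{-1,1\}$ is already supplied by hypothesis, the only ingredient that Theorem~\ref{thm:qbsgdd2} still requires is a $\bgw(m+1,m,m-1)$ over the cyclic group of order $2n$, and this must be produced for infinitely many primes $m$. The first observation I would record is that a $\bgw(m+1,m,m-1)$ is exactly a generalized conference matrix, and that its parameters are the classical parameters $\left(\frac{q^{s+1}-1}{q-1},\,q^{s},\,q^{s}-q^{s-1}\right)$ evaluated at exponent $s=1$ and prime power $q=m$, namely $(q+1,q,q-1)=(m+1,m,m-1)$.

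First I would invoke the classical construction quoted in Section~2. Taking $q=m$ to be an odd prime and the exponent $s=1$, situation (i) guarantees a $\bgw(m+1,m,m-1)$ over the cyclic group $C_N$ for every divisor $N$ of $q-1=m-1$. To land the matrix in the cyclic group of order exactly $2n$ demanded by Theorem~\ref{thm:qbsgdd2}, I therefore require $2n\mid m-1$, i.e. $m\equiv 1\pmod{2n}$. This same congruence is forced independently by the definition of a $\bgw$: the multiplicity $\lambda/|G|=(m-1)/(2n)$ must be a non-negative integer, so the two requirements coincide.

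Next I would supply infinitely many such primes. Since $\gcd(1,2n)=1$, Dirichlet's theorem on primes in arithmetic progressions furnishes infinitely many primes $m\equiv 1\pmod{2n}$; each such $m$ is odd (as $m-1$ is divisible by $2n\ge 4$) and exceeds $2$, so situation (i) genuinely applies and yields the required generalized conference matrix over $C_{2n}$. For every such $m$, feeding this matrix together with the given $\bgw(n,k,\lambda;\{-1,1\})$ into Theorem~\ref{thm:qbsgdd2} produces a quasi-balanced $\w((m+1)n,mk)$ whose absolute-value matrix is the asserted $\sgdd\left((m+1)n,mk,m+1,n,m\lambda,\frac{m-1}{n}k^2\right)$. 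The divisibility $n\mid m-1$ (a consequence of $2n\mid m-1$) also makes $\frac{m-1}{n}$, and hence $\frac{m-1}{n}k^2$, an integer, so the listed parameters are admissible.

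Since the argument is essentially an assembly of an existing theorem with a known construction, I do not expect a deep obstacle. The one point requiring care is matching the group order to $2n$ rather than to an arbitrary divisor of $m-1$: this is precisely what forces the congruence $m\equiv 1\pmod{2n}$ and hence the appeal to Dirichlet's theorem, which is the only non-elementary input to the proof.
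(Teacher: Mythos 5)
Your proposal is correct and takes essentially the same route as the paper: the paper's proof likewise consists of observing that $m$ must be of the form $2nk+1$, i.e.\ $m\equiv 1\pmod{2n}$, and invoking Dirichlet's theorem on primes in arithmetic progressions to get infinitely many such primes, then feeding the resulting classical $\bgw(m+1,m,m-1)$ over $C_{2n}$ into Theorem~\ref{thm:qbsgdd2}. Your write-up merely makes explicit what the paper leaves implicit (the identification with the classical parameters at $q=m$, exponent one, and the integrality of $(m-1)/(2n)$ and $\frac{m-1}{n}k^2$), so there is no substantive difference.
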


\begin{proof}
 Note that $m$ would be of the form $2nk+1$, for some $k \in \N$. By Dirichlet's result on arithmetic progressions \cite[Corollary 4.10]{mult-num}, there are infinitely many primes $m$ of the required form.
\end{proof}

\begin{theorem}\label{thm:qbsgdd3}
If there exist a $\bgw(m+1,m,m-1)$ over the cyclic group of order $2n$ and a negacirculant $\bgw(n,k,\lambda)$ over $\{1,-1\}$, then there exists a quasi-balanced weighing matrix $\w((m+1)n,mk)$, say $W$, for which $|W|$ is a
$$
\ddg\left((m+1)n,m k,m+1,n,m\lambda,\frac{m-1}{n}k^2\right)
$$ 
and satisfies the property that $|W|\J_{m,n}=\J_{m,n}|W|=k(J_{mn}-I_{m}\otimes J_n)$.
\end{theorem}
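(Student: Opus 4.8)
The plan is to follow the construction of Theorem~\ref{thm:qbsgdd2} verbatim, except that I would reinsert the back-identity matrix $R$ of order $n$ exactly as in Theorem~\ref{thm:cons1}; the back-identity is the device that forces $|W|$ to be symmetric, and the new hypothesis that $C_2$ be negacirculant is precisely what makes that device effective. Concretely, let $C_1$ be a $\bgw(m+1,m,m-1)$ over the cyclic group of order $2n$ generated by the nega-shift $N$ of order $n$, taken in skew-symmetric form, with $(i,j)$-entry $(1-\delta_{ij})N^{c_{ij}}$; let $C_2$ be the negacirculant $\bgw(n,k,\lambda)$ over $\{1,-1\}$; and set $W_{ij}=(1-\delta_{ij})N^{c_{ij}}C_2R$. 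Since $RR^\top=I$, every product appearing in the proof of Theorem~\ref{thm:qbsgdd2} is left unchanged by the extra factor $R$: the identity $C_2C_2^\top=kI$ together with $\sum_\ell N^{c_{i\ell}-c_{j\ell}}=O$ (the $\bgw$-property of $C_1$) gives $WW^\top=mk\,I$, and the identities $|C_2||C_2|^\top=|C_2|^\top|C_2|=(k-\lambda)I+\lambda J$ and $|C_2|^\top J|C_2|=k^2J$ give, after the same bookkeeping and the relation $k(k-1)=\lambda(n-1)$, that $|W||W|^\top=|W|^\top|W|$ has the group-divisible shape. Thus $|W|$ is an $\sgdd((m+1)n,mk,m+1,n,m\lambda,\frac{m-1}{n}k^2)$ satisfying $|W|\J_{m,n}=\J_{m,n}|W|=k(J_{mn}-I_m\otimes J_n)$, with no new work beyond citing Theorem~\ref{thm:qbsgdd2}.

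The only genuinely new assertion is that $|W|$ is the adjacency matrix of a graph, i.e. a symmetric $(0,1)$-matrix with zero diagonal, hence a divisible design graph with the listed parameters. Zero diagonal is immediate, since the diagonal blocks $W_{ii}$ vanish. For symmetry I would argue blockwise using $|W|_{ij}=(1-\delta_{ij})P^{c_{ij}}|C_2|R$, where $P=|N|$ is the cyclic shift of order $n$. The first point is that, because $C_2$ is negacirculant, $|C_2|$ is a circulant matrix; hence $P^{c_{ij}}|C_2|$ is circulant, and for any circulant $X$ one has $X^\top=RXR$ and therefore $(XR)^\top=R^\top X^\top=R(RXR)=XR$. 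This makes each off-diagonal block $|W|_{ij}=P^{c_{ij}}|C_2|R$ symmetric, and it is precisely here that the negacirculant hypothesis is consumed. The second point is that the $(i,j)$ and $(j,i)$ blocks coincide: since $C_1$ is skew-symmetric over the order-$2n$ group we have $c_{ji}\equiv c_{ij}+n\equiv c_{ij}\pmod n$, and because $P$ has order $n$ this gives $P^{c_{ji}}=P^{c_{ij}}$, whence $|W|_{ij}=|W|_{ji}$. Combining the two points yields $|W|_{ij}=|W|_{ji}=(|W|_{ji})^\top$, i.e. $|W|=|W|^\top$.

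I expect this symmetry argument to be the main obstacle, and the cleanest way to control it is to isolate the two commutation facts — that circulants commute with $P$, and that $X^\top=RXR$ for circulant $X$ — and to record explicitly that the order-$2n$ involution $N^n=-I$ disappears upon taking absolute values, since it only shifts the exponent $c_{ij}$ by $n$, which the order-$n$ shift $P$ does not detect. (A symmetric choice of $C_1$ would serve equally well, giving $c_{ji}=c_{ij}$ outright; what must be avoided is a $C_1$ that is neither symmetric nor skew, for then $P^{c_{ij}}\ne P^{c_{ji}}$ in general and the block placement fails to be symmetric.) Once $|W|$ is recognized as a symmetric $(0,1)$-matrix with zero diagonal obeying the group-divisible equation, it is by definition a $\ddg$ with the asserted parameters, and the $\J$-identity transfers verbatim from Theorem~\ref{thm:qbsgdd2}, completing the argument.
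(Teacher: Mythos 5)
Your construction is exactly the paper's: the published proof defines $W_{ij}=(1-\delta_{ij})N^{c_{ij}}C_2R$ and then simply declares the verification to be the same as in Theorem~\ref{thm:qbsgdd2}, so your proposal is correct and takes essentially the same approach. Your blockwise symmetry argument --- that $|C_2|$ is circulant, that $X^\top=RXR$ (hence $XR$ is symmetric) for circulant $X$, and that a symmetric or skew-symmetric $C_1$ gives $c_{ji}\equiv c_{ij}\pmod{n}$ so the blocks match across the diagonal --- correctly supplies the one detail the paper leaves implicit, including the tacit hypothesis that $C_1$ be taken in symmetric or skew-symmetric form, which the known classical $\bgw$ families indeed satisfy.
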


\begin{proof}
Let $C_1$ be a $\bgw(m+1,m,m-1)$ over the cyclic group of order $2n$ generated by the negacirculant matrix $N$ of order $n$, and $C_2$ be a negacirculant $\bgw(n,k,\lambda)$ over $\{1,-1\}$. For the matrix $C_1$, write the $(i,j)$-entry of $C_1$ as $(1-\delta_{ij})N^{c_{ij}}$ for $i,j\in\{1,\ldots,m+1\}$. Define an $(m+1) \times (m+1)$ $(0,\pm1)$ block matrix $W$ by $W_{ij}=(1-\delta_{ij})N^{c_{ij}}C_2 R$. Recall that $R$ is the back diagonal matrix. Then $W$ is the desired quasi-balanced weighing matrix, and the proof is same as  Theorem~\ref{thm:qbsgdd2}. 
\end{proof}

\begin{corollary}
 If there exists a $\bgw(n,k,\lambda; \{-1,1\})$, then there exists a quasi-balanced $\w((m+1)n,mk)$ where $\abs{W}$ is a 
 $$
 \ddg((m+1)n,m k,m+1,n,m\lambda,\frac{m-1}{n}k^2),
 $$
 for infinitely many primes $m$.
\end{corollary}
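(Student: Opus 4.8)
The plan is to derive the statement directly from Theorem~\ref{thm:qbsgdd3}, in precisely the way the preceding corollary follows from Theorem~\ref{thm:qbsgdd2}: I would supply the two hypotheses of Theorem~\ref{thm:qbsgdd3} for a suitable infinite family of primes $m$. Those hypotheses are a $\bgw(m+1,m,m-1)$ over the cyclic group of order $2n$ and a negacirculant $\bgw(n,k,\lambda)$ over $\{1,-1\}$. The second is provided by the assumption of the corollary (read as furnishing a negacirculant representative, which is what the theorem requires), so the entire task reduces to producing the first for infinitely many primes $m$.

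For the outer matrix, I would note that a $\bgw(m+1,m,m-1)$ is exactly a generalized conference matrix, and that such matrices arise among the classical-parameter balanced generalized weighing matrices $\left(\frac{q^{s+1}-1}{q-1},q^s,q^s-q^{s-1}\right)$ by setting $s=1$ and $q=m$. According to case (i) of the classical existence statement recorded in the preliminaries, a matrix with these parameters exists over the cyclic group $C_{n'}$ for every odd prime power $q=m$ and every divisor $n'$ of $m-1$. Taking $n'=2n$, I would conclude that the desired $\bgw(m+1,m,m-1)$ over the cyclic group of order $2n$ exists whenever $m$ is an odd prime with $2n\mid m-1$, that is, whenever $m\equiv 1\pmod{2n}$; any such $m$ is automatically odd.

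Finally, I would appeal to Dirichlet's theorem on primes in arithmetic progressions: since $\gcd(1,2n)=1$, the progression $\{1+2nj : j\in\N\}$ contains infinitely many primes \cite[Corollary 4.10]{mult-num}. For each such prime $m$, both hypotheses of Theorem~\ref{thm:qbsgdd3} are satisfied, and the theorem then delivers the quasi-balanced $\w((m+1)n,mk)$ whose absolute value is the claimed $\ddg((m+1)n,mk,m+1,n,m\lambda,\frac{m-1}{n}k^2)$; the same divisibility $2n\mid m-1$ ensures that the last parameter $\frac{m-1}{n}k^2$ is an integer. The step I expect to require the most care is forcing the outer matrix to live over the specific group $C_{2n}$ rather than over an arbitrary cyclic group: it is exactly this requirement that pins down the congruence $m\equiv 1\pmod{2n}$ and makes the invocation of Dirichlet necessary. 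A minor additional point is the negacirculant hypothesis on the inner $\bgw(n,k,\lambda)$ in Theorem~\ref{thm:qbsgdd3}, which should be built into how the assumption of the corollary is read.
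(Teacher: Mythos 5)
Your proposal is correct and follows essentially the same route as the paper: the paper's (one-line) argument for the analogous corollary likewise takes $m\equiv 1\pmod{2n}$, invokes Dirichlet's theorem via \cite[Corollary 4.10]{mult-num}, and implicitly relies on the classical-parameter family $\bgw(m+1,m,m-1)$ over $C_{2n}$ for odd prime $m$ with $2n\mid m-1$, which you simply spell out explicitly. Your added care about the negacirculant hypothesis on the inner $\bgw(n,k,\lambda)$ is a reasonable reading of the corollary's statement and is exactly what Theorem~\ref{thm:qbsgdd3} requires.
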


\subsection{Computer searches}

To make a computer search feasible the case of a quasi-balanced signing of the adjacency matrix of a strongly regular graph is considered in this section.

Let $W$ be a quasi-balanced signing of an SRG$(v,k,\lambda,\mu)$ over $\R_n$, and consider the multisets
$$
S_{ij} = \{W_{i\ell}\overline{W_{j\ell}} : 0 \leq \ell < v, W_{i\ell} \neq 0, W_{j\ell} \neq 0\}, \qquad 0 \leq i < j < v.
$$
If each element of $\R_n$ appears either $\lambda/n$ or $\mu/n$ times in every $S_{ij}$ predicated upon whether or not $W_{ij} \neq 0$, then we say that the signing is \emph{srg-balanced}. Note that this assumes $n$ is a common divisor of $\lambda$ and $\mu$.

Assume now that $W$ corresponds to a balanced signing of an SRG$(v,k,\lambda,\mu)$ with adjacency matrix $A$, and continue to assume that $i \neq j$. If the $i$-th and $j$-th rows of $A$ are incident in $\lambda$ nonzero positions, i.e. $W_{ij} \neq 0$, then the conjugate inner product of the corresponding rows of $W$ must contain each element of $\R_n$ a constant number of times, namely, $\lambda/n$ times. Similarly, if $W_{ij} = 0$, then the product between the corresponding rows must contain each element of $\R_n$ $\mu/n$ times. Put another way, $W$ corresponds to an srg-balanced signing if and only if
$$
WW^* = kI_v + \frac{\lambda}{n}\left(\sum_{\alpha \in R_n}\alpha\right)A + \frac{\mu}{n}\left(\sum_{\alpha \in R_n}\alpha\right)(J_v - I_v - A).
$$
Since $\sum_{\alpha \in \R_n}\alpha = 0$, we obtain again that $WW^* = kI_v$. Evidently, then, an srg-balanced signing implies the property of being a Butson weighing matrix. The converse is not true in general, however.

\begin{example}\label{strictly-quasi-balanced}
 It was shown in \cite{gm3} that an SRG$(16,6,2,2)$ cannot have an srg-balanced signing; however, we can obtain a stricly quasi-balanced signing over $\{-1,1,-i,i\}$. See Figure \ref{srg-16-6-2-2} of the appendix.
\end{example}

In the case that $n=p$ is prime, we can then say more. The following can be found in \cite{lam-leung}.

\begin{lemma}
 Let $p$ be prime, and let $\omega$ be a primitive $p$-th root of unity. Then $\sum_{j=0}^n a_j \omega^j = 0$ for some $n < p$ and $a_0, \dots, a_n \in \N$ not all zero if and only if $n=p-1$ and $a_0 = \cdots = a_n$.
\end{lemma}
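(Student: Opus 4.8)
The plan is to reduce the entire statement to the single structural fact that the $p$-th cyclotomic polynomial is irreducible, so that $1,\omega,\ldots,\omega^{p-2}$ form a $\Q$-basis of $\Q(\omega)$. First I would recall that, since $p$ is prime, the minimal polynomial of $\omega$ over $\Q$ is
\[
\Phi_p(x)=1+x+x^2+\cdots+x^{p-1},
\]
which is irreducible over $\Q$ (apply Eisenstein's criterion to $\Phi_p(x+1)$). Consequently $[\Q(\omega):\Q]=p-1$, the powers $1,\omega,\ldots,\omega^{p-2}$ are linearly independent over $\Q$, and $\omega^{p-1}=-(1+\omega+\cdots+\omega^{p-2})$.

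The reverse implication is then immediate. If $n=p-1$ and $a_0=\cdots=a_{p-1}=:c$, then
\[
\sum_{j=0}^{p-1}a_j\omega^j=c\sum_{j=0}^{p-1}\omega^j=c\,\Phi_p(\omega)=0,
\]
since $\omega$ is a primitive $p$-th root of unity.

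For the forward implication, I would assume $\sum_{j=0}^{n}a_j\omega^j=0$ with $a_0,\ldots,a_n\in\N$ not all zero and $n<p$, and split on the value of $n$. If $n\le p-2$, the relation is a nontrivial $\Q$-linear dependence among $1,\omega,\ldots,\omega^{p-2}$, contradicting their linear independence; hence $n=p-1$. Eliminating the top power via $\omega^{p-1}=-\sum_{j=0}^{p-2}\omega^j$ rewrites the relation as $\sum_{j=0}^{p-2}(a_j-a_{p-1})\omega^j=0$, and linear independence forces $a_j=a_{p-1}$ for every $j$, so all the $a_j$ coincide.

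There is no genuine obstacle beyond invoking the irreducibility of $\Phi_p$: once the independence of $1,\ldots,\omega^{p-2}$ is in hand, both directions are elementary linear algebra. The only point needing a little care is bookkeeping in the case distinction, namely noting that the hypothesis permits individual $a_j=0$, so that the conclusion $a_0=\cdots=a_{p-1}$ together with the non-triviality assumption is what pins the common value down to a positive integer.
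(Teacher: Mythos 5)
Your proof is correct and rests on the same key fact as the paper's, namely that $\Phi_p(x)=1+x+\cdots+x^{p-1}$ is the minimal polynomial of $\omega$ over $\Q$; the paper phrases the finish as divisibility ($h\mid f$ with $\deg f<p$ forcing $f=kh$), while you phrase it equivalently as linear independence of $1,\omega,\ldots,\omega^{p-2}$ followed by eliminating the top power. Your write-up is slightly more complete, since you also verify the trivial converse and cite Eisenstein for irreducibility, both of which the paper leaves implicit.
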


\begin{proof}
 Take $f(x)=\sum_{j=0}^n a_j x^j \in \mathbb{Z}[x]$. The $\Q$-minimal polynomial of $\omega$ is $h(x)=\sum_{j=0}^{p-1}x^j$. Since $f(\omega)=0$, $h \mid f$. Moreover, since $\text{deg }f < p$, it follows that $f = kh$, for some $k \in \Z$.
\end{proof}

We can now state the following.

\begin{theorem}
 Let $A$ be the adjacency matrix of an SRG$(v,k,\lambda,\mu)$ admitting a signing over the $p$-th roots of unity for some prime $p$. Then $p$ is a common divisor of $\lambda$ and $\mu$, and, moreover, the signing is srg-balanced.
\end{theorem}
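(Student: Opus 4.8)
The plan is to extract the vanishing off-diagonal sums of $WW^*$ and feed them directly into the preceding Lemma. Recall that a signing over $\R_p$ means $W$ has entries in $\{0\}\cup\R_p$ with $\abs{W}=A$ and $WW^*=kI_v$. Hence, for any distinct $i,j$, the vanishing of the $(i,j)$-entry of $WW^*$ gives
$$
\sum_{\ell=0}^{v-1} W_{i\ell}\,\overline{W_{j\ell}} = 0.
$$
The surviving terms are exactly those indexed by the common neighbors of $i$ and $j$, of which there are $\lambda$ when $i,j$ are adjacent (equivalently $W_{ij}\neq 0$) and $\mu$ when they are non-adjacent (equivalently $W_{ij}=0$). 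Thus the left-hand side is precisely the sum of the entries of the multiset $S_{ij}$, whose cardinality is $\abs{S_{ij}}\in\{\lambda,\mu\}$ accordingly.

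First I would note that every product $W_{i\ell}\overline{W_{j\ell}}$ is again a $p$-th root of unity, since $\R_p$ is closed under multiplication and complex conjugation. Writing $\omega=e^{2\pi i/p}$ and letting $a_t$ be the multiplicity of $\omega^t$ in $S_{ij}$, the identity above becomes a vanishing sum of $p$-th roots of unity with non-negative integer coefficients,
$$
\sum_{t=0}^{p-1} a_t\,\omega^t = 0, \qquad \sum_{t=0}^{p-1} a_t = \abs{S_{ij}}, \quad a_t\in\N.
$$

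The heart of the argument is then a single application of the Lemma. Provided the relevant multiset is nonempty, the $a_t$ are not all zero, so the Lemma (with $n=p-1$) forces $a_0=\cdots=a_{p-1}$; writing $c$ for this common value gives $\abs{S_{ij}}=pc$. Hence $p$ divides $\abs{S_{ij}}$ and each $p$-th root of unity occurs exactly $\abs{S_{ij}}/p$ times in $S_{ij}$. Applying this to an adjacent pair yields $p\mid\lambda$ with each root appearing $\lambda/p$ times, and to a non-adjacent pair yields $p\mid\mu$ with each root appearing $\mu/p$ times; by definition this is exactly the srg-balanced property.

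I do not expect a serious obstacle here, as the statement is essentially a clean consequence of the Lemma; the only points requiring care are the two small ones already flagged. The first is the closure observation that each $W_{i\ell}\overline{W_{j\ell}}$ lands back in $\R_p$, which is what licenses the form of the vanishing sum. The second is the degenerate case $\lambda=0$ or $\mu=0$, where the corresponding multiset is empty and the Lemma does not apply; there, however, $p\mid 0$ holds trivially and the requirement that every root occur $0$ times is vacuous, so the conclusion still stands.
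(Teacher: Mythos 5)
Your proposal is correct and is precisely the argument the paper intends: it states this theorem without an explicit proof as an immediate consequence of the Lam--Leung lemma applied to the vanishing off-diagonal entries $\sum_{\ell} W_{i\ell}\overline{W_{j\ell}}=0$ of $WW^*=kI_v$, whose nonzero terms form the multiset $S_{ij}$ of size $\lambda$ or $\mu$ introduced just before the lemma. Your two flagged points (closure of $\R_p$ under products and conjugates, and the vacuous case $\lambda=0$ or $\mu=0$) are exactly the right details to make the omitted proof complete.
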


\begin{corollary}
 The quasi-balanced weighing matrices constructed in Theorems \ref{thm:cons1}, \ref{thm:qbsgdd1}, \ref{thm:qbsgdd2}, and \ref{thm:qbsgdd3} are srg-balanced.
\end{corollary}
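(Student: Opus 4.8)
The plan is to reduce the entire statement to one observation: all four constructions produce genuine \emph{real} weighing matrices, i.e.\ signings over $\R_2=\{1,-1\}$, for which $2$ is prime, and then to feed this into the preceding theorem. First I would check that each constructed $W$ has entries in $\{0,\pm1\}$. In Theorem~\ref{thm:cons1} the diagonal blocks equal the negacirculant $C$ (entries $0,\pm1$) while the off-diagonal blocks are $H_{ij}R$ with $H_{ij}\in\NN_{2,1+q}$ a power of the nega-shift matrix, whose entries lie in $\{0,\pm1\}$; in Theorems~\ref{thm:qbsgdd1}, \ref{thm:qbsgdd2}, and~\ref{thm:qbsgdd3} each block is $N^{c_{ij}}C_2$, possibly multiplied by $R$, a product of the signed permutation $N^{c_{ij}}$ with the $\pm1$-valued $C_2$, hence again $(0,\pm1)$. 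In particular none of these four matrices involves the complex or quaternionic units appearing in Theorems~\ref{thm:cons2} and~\ref{pp}, so the ambient root-of-unity group is $\R_2$ and the prime is $p=2$.

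For Theorem~\ref{thm:cons1} the conclusion is then immediate: $W$ is a signing over $\R_2$ of the $\srg(1+q+q^2+q^3,q+q^2,q-1,q+1)$, so the preceding theorem applies verbatim with $p=2$ and yields both that $2$ divides $\lambda=q-1$ and $\mu=q+1$ (transparent here since $q$ is odd) and that the signing is srg-balanced.

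For the three group-divisible constructions I would run the same mechanism against the natural analogue of srg-balance, in which the counts $\lambda,\mu$ are replaced by the design parameters $\lambda_1,\lambda_2$ and ``same class versus different class'' replaces ``adjacent versus non-adjacent.'' The steps are: (i) each such $W$ is a real weighing matrix, so $WW^\top$ is a scalar multiple of the identity and distinct rows are orthogonal; (ii) for rows $i\ne j$ the number of common nonzero positions equals $(|W||W|^\top)_{ij}$, which by the defining $\sgdd$/$\ddg$ equation is $\lambda_1$ or $\lambda_2$ according to whether the rows lie in the same class; and (iii) the inner product of the two rows is a vanishing sum of that many $\pm1$'s, so by the Lemma specialized to $p=2$ the signs split into equally many $+1$'s and $-1$'s. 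That equal split is precisely the assertion that each element of $\R_2$ occurs $\lambda_1/2$ or $\lambda_2/2$ times, i.e.\ the balance condition, and it forces $2\mid\lambda_1$ and $2\mid\lambda_2$, consistent with the divisibility conditions imposed by the orders $2n$ and $2$ of the groups over which $C_1$ and $C_2$ are defined.

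The hard part is not computational but definitional: ``srg-balanced'' is introduced in the text only for signings of strongly regular graphs, whereas Theorems~\ref{thm:qbsgdd1}--\ref{thm:qbsgdd3} output symmetric group divisible designs and a divisible design graph. The main obstacle is therefore to pin down the correct extension of srg-balance to these block structures and to match the row-overlap counts to $\lambda_1,\lambda_2$ through the $\sgdd$/$\ddg$ equations. Once that identification is made the balance is automatic, because over $\R_2$ the Lemma degenerates to the trivial fact that a vanishing $\pm1$-sum has as many $+1$'s as $-1$'s, so for these real matrices row orthogonality and balance are one and the same.
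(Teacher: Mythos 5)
Your proposal is correct and follows essentially the same route as the paper, which leaves the corollary proofless precisely because all four constructions are signings over $\{1,-1\}$, i.e.\ over the $p$-th roots of unity with $p=2$, so the preceding theorem (whose content for $p=2$ degenerates to your equal-split observation that a vanishing sum of $\pm1$'s has as many $+1$'s as $-1$'s) applies at once. Your explicit extension of the srg-balanced condition to the outputs of Theorems~\ref{thm:qbsgdd1}, \ref{thm:qbsgdd2}, and~\ref{thm:qbsgdd3} --- replacing $(\lambda,\mu)$ by $(\lambda_1,\lambda_2)$ and adjacency by same-class versus different-class, with the common-support counts read off from the defining $\sgdd$/$\ddg$ equation --- is not a different method but a careful patch of a definitional gap the paper silently glosses over, since srg-balance is formally defined only when $\abs{W}$ is a strongly regular graph.
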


Using a table of the adjacency matrices of inequivalent, small parameter strongly regular graphs, we endeavoured to sign these matrices in order to obtain srg-balanced signings. The only non-trivial common divisors of the indices of the tabulated parameter sets are 2 and 3. The following are the results we obtained.

\begin{longtable}[c.]{cccc}
 \caption{Strongly regular graphs admitting an srg-balanced signing.$^*$}\\

\it Parameters & $\Z_2$ & $\Z_3$ & \it Reference \\
\hline
 \endfirsthead

 \multicolumn{4}{c}{\it Continuation of Table \ref{long}.}\\
 \it Parameters & $\Z_2$ & $\Z_3$ & \it Reference \\
 \hline
 \endhead


5-2-0-1	& --- & --- & --- \\ 
9-4-1-2 & --- & --- & --- \\
10-3-0-1 & --- & --- & --- \\ 
10-6-3-4 & --- & --- & --- \\
13-6-2-3 & --- & --- & --- \\
15-6-1-3 & --- & --- & --- \\ 
15-8-4-4 & NO & --- & \cite{gm3} \\
16-5-0-2 & YES$^\dag$ (Figure \ref{srg-16-5-0-2}) & --- & NEW \\ 
16-6-2-2 & NO$^\ddag$ & --- & \cite{sch} \\ 
16-9-4-6 & YES (Figure \ref{sgr-16-9-4-6})& --- & NEW \\
16-10-6-6 & NO & NO & \cite{gm3} \\
17-8-3-4 & --- & --- & --- \\ 
21-10-3-6 & --- & NO & NEW \\
21-10-5-4 & --- & --- & --- \\
25-8-3-2 & --- & --- & --- \\
25-12-5-6 & --- & --- & --- \\
25-16-9-12 & --- & ? & --- \\ 
26-10-3-4 & --- & --- & --- \\ 
26-15-8-9 & --- & --- & --- \\
27-10-1-5 & --- & --- & --- \\ 
27-16-10-8 & ? & --- & --- \\
28-12-6-4 & YES (Figure \ref{srg-28-12-6-4})& --- & NEW \\ 
28-15-6-10 & ? & --- & --- \\
29-14-6-7 & --- & --- & --- \\ 

\end{longtable}

$^*$The signings obtained are shown in the appendix.

$^\dag$Note that the signing of the SRG$(16,5,0,2)$ over $\Z_2$ is unique up to isomorphism. The remaining signings obtained have not been characterized in this way.
 
$^\ddag$From Example \ref{strictly-quasi-balanced}, we see that the SRG$(16,6,2,2)$ admits a strictly quasi-balanced signing.
 
The entries given by a --- indicates that an srg-balanced signing is not possible as the $\lambda$ and $\mu$ of the SRG share no nontrivial divisors. If one were to broaden their search and admit any quasi-balanced signing, then many of the entries would continue to fail admiting a signing. This can be seen by noting necessary conditions such as if the number of vertices is odd, then the degree must be square, and if the number of vertices is $2\pmod{4}$, then the degree must be the sum of two squares, etc. The interested reader may profitably consult \cite{DLF,OD} for details.

\section{Quasi-balanced weighing matrices and association schemes}

We consider quasi-balanced weighing matrices $W$ such that $|W|$ satisfies
\begin{enumerate*}
\item $|W|$ is the adjacency matrix of a strongly regular graph, or
\item $|W|$ is the incidence matrix of a symmetric group divisible design such that $|W|\J_{m,n}=\J_{m,n}|W|=\frac{k}{m}J_v$ or that $|W|\J_{m,n}=\J_{m,n}|W|=\frac{k}{m-1}(J_v-\J_{m,n})$. 
\end{enumerate*}

In this section, we show that association schemes are obtained from these quasi-balanced weighing matrices and characterize the quasi-balanced weighing matrices with these association schemes.  

\subsection{Association schemes from signed strongly regular graphs}

Let $W=W_1-W_2$ be a quasi-balanced weighing matrix whose absolute matrix is a strongly regular graph,  where $W_1$ and $W_2$ are disjoint $(0,1)$-matrices. Then $|W|=W_1+W_2$. For simplicity, we take $|W|=A$.

Let $P=\begin{pmatrix}0 & 1 \\ 1 & 0\end{pmatrix}$, and define the adjacency matrices as follows:
\begin{align}
A_i&=\begin{pmatrix}
P^{i-1}\otimes I_v&0\\
0& P^{i-1}\otimes I_v
\end{pmatrix}\quad (0\leq i \leq 1) ,\label{eq:a1srg}\\
A_2&=\begin{pmatrix}
J_2 \otimes A&0\\
0&J_2 \otimes A
\end{pmatrix},\label{eq:a2srg}\displaybreak[0]\\
A_{3}&=\begin{pmatrix}
J_2 \otimes (J_v-I_v-A)&0\\
0&J_2 \otimes (J_v-I_v-A)
\end{pmatrix},\label{eq:a3srg}\displaybreak[0]\\
A_{4}&=\begin{pmatrix}
0&J_2\otimes I_v\\
J_2\otimes I_v&0
\end{pmatrix},\label{eq:a4srg}\displaybreak[0]\\
A_{5}&=\begin{pmatrix}
0&I_2\otimes W_1+P\otimes W_2\\
I_2\otimes W_1^\top+P\otimes W_2^\top&0
\end{pmatrix},\nonumber\\
A_{6}&=\begin{pmatrix}
0&I_2\otimes W_2+P\otimes W_1\\
I_2\otimes W_2^\top+P\otimes W_1^\top&0
\end{pmatrix},\nonumber\\
A_{7}&=\begin{pmatrix}
0&J_2\otimes(J_v-I_v-A)\\
J_2\otimes(J_v-I_v-A)&0
\end{pmatrix}.\nonumber
\end{align}


\begin{theorem}\label{thm:assrg}
Let $W$ be a quasi-balanced weighing matrix such that $|W|$ is an $\srg(v,k,\lambda,\mu)$. Then $\{A_i\}_{i=0}^7$ is an association scheme where the eigenmatrices $P$ and $Q$ are given by 
\begin{align*}
P&=\left(
\begin{array}{cccccccc}
 1 & 1 & 2 k & -\frac{2 k (s+1) (t+1)}{k+s t} & 2 & k & k & -\frac{2 k (s+1) (t+1)}{k+s t} \\
 1 & 1 & 2 k & -\frac{2 k (s+1) (t+1)}{k+s t} & -2 & -k & -k & \frac{2 k (s+1) (t+1)}{k+s t} \\
 1 & 1 & 2 s & -2 (s+1) & 2 & s & s & -2 (s+1) \\
 1 & 1 & 2 s & -2 (s+1) & -2 & -s & -s & 2 (s+1) \\
 1 & 1 & 2 t & -2 (t+1) & 2 & t & t & -2 (t+1) \\
 1 & 1 & 2 t & -2 (t+1) & -2 & -t & -t & 2 (t+1) \\
 1 & -1 & 0 & 0 & 0 & \sqrt{k} & -\sqrt{k} & 0 \\
 1 & -1 & 0 & 0 & 0 & -\sqrt{k} & \sqrt{k} & 0 \\
\end{array}\right), \\
Q&=\left(
\arraycolsep=0.75pt
\begin{array}{cccccccc}
 1 & 1 & \frac{k (k-t) (t+1)}{(t-s) (k+s t)} & \frac{k (k-t) (t+1)}{(t-s) (k+s t)} & \frac{k (k-s) (s+1)}{(s-t) (k+s t)} & \frac{k (k-s) (s+1)}{(s-t) (k+s t)} & \frac{(k-s) (k-t)}{k+s t} & \frac{(k-s) (k-t)}{k+s t} \\
 1 & 1 & \frac{k (k-t) (t+1)}{(t-s) (k+s t)} & \frac{k (k-t) (t+1)}{(t-s) (k+s t)} & \frac{k (k-s) (s+1)}{(s-t) (k+s t)} & \frac{k (k-s) (s+1)}{(s-t) (k+s t)} & -\frac{(k-s) (k-t)}{k+s t} & -\frac{(k-s) (k-t)}{k+s t} \\
 1 & 1 & \frac{s (t+1) (t-k)}{(s-t) (k+s t)} & \frac{s (t+1) (t-k)}{(s-t) (k+s t)} & \frac{(k-s) (s+1) t}{(s-t) (k+s t)} & \frac{(k-s) (s+1) t}{(s-t) (k+s t)} & 0 & 0 \\
 1 & 1 & \frac{t-k}{s-t} & \frac{t-k}{s-t} & \frac{k-s}{s-t} & \frac{k-s}{s-t} & 0 & 0 \\
 1 & -1 & \frac{k (k-t) (t+1)}{(t-s) (k+s t)} & \frac{k (k-t) (t+1)}{(s-t) (k+s t)} & \frac{k (k-s) (s+1)}{(s-t) (k+s t)} & -\frac{k (k-s) (s+1)}{(s-t) (k+s t)} & 0 & 0 \\
 1 & -1 & \frac{s (t+1) (t-k)}{(s-t) (k+s t)} & \frac{s (k-t) (t+1)}{(s-t) (k+s t)} & \frac{(k-s) (s+1) t}{(s-t) (k+s t)} & -\frac{(k-s) (s+1) t}{(s-t) (k+s t)} & \frac{(k-s) (k-t)}{\sqrt{k} (k+s t)} & -\frac{(k-s) (k-t)}{\sqrt{k} (k+s t)} \\
 1 & -1 & \frac{s (t+1) (t-k)}{(s-t) (k+s t)} & \frac{s (k-t) (t+1)}{(s-t) (k+s t)} & \frac{(k-s) (s+1) t}{(s-t) (k+s t)} & -\frac{(k-s) (s+1) t}{(s-t) (k+s t)} & -\frac{(k-s) (k-t)}{\sqrt{k} (k+s t)} & \frac{(k-s) (k-t)}{\sqrt{k} (k+s t)} \\
 1 & -1 & \frac{t-k}{s-t} & \frac{k-t}{s-t} & \frac{k-s}{s-t} & \frac{s-k}{s-t} & 0 & 0 \\
\end{array}
\right), 
\end{align*}
where $k>s>t$ are the distinct eigenvalues of the strongly regular graph.  
\end{theorem}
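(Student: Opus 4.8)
The plan is to simultaneously diagonalize $A_0,\dots,A_7$ by exploiting the threefold tensor structure of the ambient $4v$-dimensional space, which I regard as $\mathbb{C}^2_{\mathrm{out}}\otimes\mathbb{C}^2_{\mathrm{ten}}\otimes\mathbb{C}^v_{\mathrm{srg}}$: the first factor governs the outer $2\times2$ block pattern, the second the factor carried by the swap $P$, and the third the strongly regular graph. Let $u_\pm=(1,\pm1)^\top$ be the $\pm1$-eigenvectors of the outer swap, let $w_\pm$ be the $\pm1$-eigenvectors of $P$, and recall that $A$ has eigenvalue $k$ on $\mathbf 1$ and eigenvalues $s,t$ on the orthogonal spaces $V_s,V_t$ of dimensions $f,g$. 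First I would record how each block acts on the $w_\pm$ sectors. Since $J_2w_+=2w_+$ and $J_2w_-=0$, every matrix built from $J_2$ (namely $A_2,A_3,A_4,A_7$) vanishes on the $w_-$ sector and acquires a factor $2$ on the $w_+$ sector; and writing $B_5=I_2\otimes W_1+P\otimes W_2$ for the top-right block of $A_5$, one checks $B_5(w_+\otimes\xi)=w_+\otimes A\xi$ and $B_5(w_-\otimes\xi)=w_-\otimes W\xi$, while the block of $A_6$ sends these to $w_+\otimes A\xi$ and $w_-\otimes(-W)\xi$. Thus both off-diagonal relations collapse to the graph $A$ on the symmetric tensor sector and to $\pm W$ on the antisymmetric one.

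The conceptual crux lies in the antisymmetric ($w_-$) sector. There $A_5$ and $A_6$ restrict to $\pm\left(\begin{smallmatrix}0&W\\W^\top&0\end{smallmatrix}\right)$ on $\mathbb{C}^2_{\mathrm{out}}\otimes\mathbb{C}^v_{\mathrm{srg}}$, a real symmetric matrix whose square is $\operatorname{diag}(WW^\top,W^\top W)=kI_{2v}$ by the weighing condition $WW^\top=kI_v$. Hence its spectrum is exactly $\pm\sqrt k$, each of multiplicity $v$, and these two eigenspaces supply the last two common eigenspaces and the entries $\pm\sqrt k$ in the final two rows of $P$. This is the step I expect to be the main obstacle: because $W_1,W_2$ need be neither symmetric nor commuting with $A$, the naive ansatz that each $A_i$ is a tensor product of three commuting operators fails for $A_5,A_6$, and it is precisely the weighing identity that forces the flat spectrum and makes the $w_-$ sector split into two scalar eigenspaces.

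With this in hand I would list the eight mutually orthogonal common eigenspaces — the six spaces $u_\pm\otimes w_+\otimes\{\mathbf 1,V_s,V_t\}$ together with the two $\pm\sqrt k$-eigenspaces in the $w_-$ sector — confirm the dimension count $2(1+f+g)+2v=4v$, and read off the eigenvalue of each $A_i$ on each space, thereby recovering the eigenmatrix $P$ column by column (the outer factor contributes $2\epsilon$ under $A_4,A_7$; $A_2$ gives $2k,2s,2t$; $A_3$ gives $2(v-1-k),-2(s+1),-2(t+1)$; and $A_5,A_6$ give $\epsilon\cdot\{k,s,t\}$ on the $w_+$ sector). Since $k>s>t$ and $k>0$, the eight rows of $P$ are pairwise distinct, so there are genuinely eight classes. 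To conclude that $\{A_i\}$ is an association scheme I would argue algebraically: the $A_i$ are symmetric, $(0,1)$-valued, linearly independent, sum to $J_{4v}$, and contain $A_0=I_{4v}$; being scalar on every $E_j$, they all lie in the $8$-dimensional commutative $*$-algebra $\operatorname{span}\{E_0,\dots,E_7\}$, and as eight independent elements they form a basis of it. Hence the span is closed under multiplication, and the structure constants $p^k_{ij}=(A_iA_j)_{xy}$ are nonnegative integers by their combinatorial meaning as path counts, which verifies all the axioms.

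Finally, $Q$ follows formally from $P$. I would compute the multiplicities $m_j=\dim E_j$, namely $1,1,f,f,g,g,v,v$, and apply the standard relation $Q_{ij}=\tfrac{m_j}{k_i}P_{ji}$ (equivalently, verify $PQ=4vI$). Substituting the classical data of the strongly regular graph — $\mu=k+st$, $\lambda=k+s+t+st$, and the multiplicity $f=\tfrac{k(k-t)(t+1)}{(t-s)(k+st)}$ derived from $1+f+g=v$ together with $k+fs+gt=0$ — turns these products into the displayed rational functions of $k,s,t$, completing the identification of $Q$.
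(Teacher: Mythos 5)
Your proof is correct, but it follows a genuinely different route from the paper's. The paper first derives the quadratic identities $W_1W_1^\top+W_2W_2^\top=\tfrac12(2kI_v+\lambda A+\mu(J_v-I_v-A))$ and $W_1W_2^\top+W_2W_1^\top=\tfrac12(\lambda A+\mu(J_v-I_v-A))$, checks from these that the products $A_iA_j$ expand integrally over $\{A_0,\dots,A_7\}$, then computes the intersection matrix of the scheme and invokes \cite[Theorem 4.1]{BI} (eigenvalues of the intersection matrices) to produce $P$ and $Q$ after substituting $v=\frac{(k-s)(k-t)}{k+st}$, $\lambda=k+s+t+st$, $\mu=k+st$. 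You instead diagonalize all eight matrices simultaneously via the tensor decomposition $\mathbb{C}^2\otimes\mathbb{C}^2\otimes\mathbb{C}^v$, and you correctly isolate where the weighing condition enters: on the $w_-$ sector the only nontrivial operator is $M=\left(\begin{smallmatrix}0&W\\W^\top&0\end{smallmatrix}\right)$, whose square is $kI_{2v}$ by $WW^\top=W^\top W=kI_v$, forcing the flat spectrum $\pm\sqrt{k}$ (the equal multiplicities $v$ follow from $\operatorname{tr}M=0$, which you should state explicitly). Your route delivers the primitive idempotents and the multiplicities $1,1,f,f,g,g,v,v$ explicitly, so the scheme property and both eigenmatrices come out in one stroke: you bypass the paper's ``readily follows'' verification of the products and never need the cross identities for $W_1W_2^\top+W_2W_1^\top$ at all, and your closure argument is sound once you note that the $A_i$ have pairwise disjoint supports (hence are linearly independent, so the eight of them form a basis of the commutative span of the $E_j$, with $p_{ij}^k$ nonnegative integers as path counts). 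What the paper's approach buys in exchange is the intersection numbers themselves, which the eigenmatrices alone do not display, and a shorter mechanical computation once the scheme axioms are granted; your approach buys conceptual transparency and the explicit common eigenspaces. Your multiplicity formula $f=\frac{k(k-t)(t+1)}{(t-s)(k+st)}$ and the orthogonality relation $Q_{ij}=\frac{m_j}{k_i}P_{ji}$ do reproduce the stated $Q$, so the identification is complete.
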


\begin{proof}
Since $W=W_1-W_2$ is a weighing matrix, and since $W_1+W_2$ is the adjacency matrix of a strongly regular graph with parameters $(v,k,\lambda,\mu)$, we have 
\begin{align*}
W_1W_1^\top+W_2W_2^\top&=W_1^\top W_1+W_2^\top W_2=\frac{1}{2}(2k I_v+\lambda A+\mu(J_v-I_v-A)), \\
W_1W_2^\top+W_2W_1^\top&=W_1^\top W_2+W_2^\top W_1=\frac{1}{2}(\lambda A+\mu(J_v-I_v-A)).
\end{align*}
It follows readily from the equations above that the $A_i$'s form an association scheme. 

It is straightforward to see that the intersection array $B_4$ is given by 
$$B_5=
\begin{pmatrix}
 0 & 0 & 0 & 0 & 0 & 1 & 0 & 0 \\
 0 & 0 & 0 & 0 & 0 & 0 & 1 & 0 \\
 0 & 0 & 0 & 0 & k & \lambda & \lambda & \mu \\
 0 & 0 & 0 & 0 & 0 & k-\lambda-1 & k-\lambda-1 & k-\mu \\
 0 & 0 & 1 & 0 & 0 & 0 & 0 & 0 \\
 k & 0 & \frac{\lambda}{2} & \frac{\mu}{2} & 0 & 0 & 0 & 0 \\
 0 & k & \frac{\lambda}{2} & \frac{\mu}{2} & 0 & 0 & 0 & 0 \\
 0 & 0 & k-\lambda-1 & k-\mu & 0 & 0 & 0 & 0
\end{pmatrix}.
$$
By the formula 
$$
v=\frac{(k-s)(k-t)}{k+st},\quad\lambda=k+s+t+st,\quad \mu=k+st,
$$ 
apply \cite[Theorem 4.1]{BI} to this case to obtain the desired eigenmatrices. 
\end{proof}

\begin{theorem}
If there exists an association scheme with the eigenmatrices given in Theorem~\ref{thm:assrg}, then there exists quasi-balanced weighing matrix $W$ such that $|W|$ is an $\srg(v,k,\lambda,\mu)$ where $v=\frac{(k-s)(k-t)}{k+st},\lambda=k+s+t+st,\mu=k+st$.
\end{theorem}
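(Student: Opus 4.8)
The plan is to invert the construction of Theorem~\ref{thm:assrg}: starting from an abstract association scheme $\{A_i\}_{i=0}^7$ carrying the prescribed eigenmatrices $P$ and $Q$, I would first recover the combinatorial scaffolding (a two-fold block structure together with a fibering into $v$ parts), then read off a pair of disjoint $(0,1)$-matrices $W_1,W_2$ as blocks of two of the relations, and finally set $W=W_1-W_2$ and verify the weighing and strongly-regular conditions. The guiding principle is that the eigenmatrices determine all intersection numbers, so every structural fact I need is forced by $P$ (equivalently by the intersection matrix $B_5$ displayed in the proof of Theorem~\ref{thm:assrg}).

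First, from the first row of $P$ I would read the valencies $k_i=P_{0i}$. In particular one relation (the analogue of $A_1$) has valency $1$ and vanishing trace, hence is a fixed-point-free involution, while the valencies and the structure of the scheme force $A_0+A_1+A_2+A_3=\begin{pmatrix} J_{2v} & O \\ O & J_{2v}\end{pmatrix}$ and $A_4+A_5+A_6+A_7=\begin{pmatrix} O & J_{2v} \\ J_{2v} & O\end{pmatrix}$. This exhibits the scheme as imprimitive with respect to a partition of the $4v$ vertices into two halves of size $2v$, supplying the outer $2\times2$ block structure; the valency-one involution supplies an inner factor of two, so the vertex set is coordinatised as $\{1,2\}\times\{1,2\}\times\{1,\dots,v\}$. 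Quotienting a half by the identity together with the valency-one involution produces $v$ fibers of size two, and the relation of valency $2k$ (the analogue of $A_2$) descends to a graph on these $v$ fibers whose parameters, computed from the intersection numbers, are exactly those of an $\srg(v,k,\lambda,\mu)$ with $v=\frac{(k-s)(k-t)}{k+st}$, $\lambda=k+s+t+st$, $\mu=k+st$; I call its adjacency matrix $A$.

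Next, I would locate the two off-diagonal relations of valency $k$ (the analogues of $A_5,A_6$). Because the Bose-Mesner algebra is commutative, each commutes with the valency-one involution, which forces its off-diagonal $2v\times 2v$ block to commute with the inner swap and hence to take the circulant form $\begin{pmatrix} W_1 & W_2 \\ W_2 & W_1\end{pmatrix}$ and $\begin{pmatrix} W_2 & W_1 \\ W_1 & W_2\end{pmatrix}$ for a single pair of $(0,1)$-matrices $W_1,W_2$. Since these two relations are distinct, $W_1$ and $W_2$ are disjoint, and since their sum has off-diagonal block $J_2\otimes A$ we get $W_1+W_2=A$. Setting $W=W_1-W_2$ gives $|W|=W_1+W_2=A$, the strongly regular graph already identified. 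To finish I would verify $WW^\top=kI_v$ by expanding $WW^\top=(W_1W_1^\top+W_2W_2^\top)-(W_1W_2^\top+W_2W_1^\top)$: the two parenthesised sums are governed by the intersection numbers $p_{5,5}^{\,\ell}$ and $p_{5,6}^{\,\ell}$, which are precisely those yielding $W_1W_1^\top+W_2W_2^\top=\tfrac12\big(2kI_v+\lambda A+\mu(J_v-I_v-A)\big)$ and $W_1W_2^\top+W_2W_1^\top=\tfrac12\big(\lambda A+\mu(J_v-I_v-A)\big)$, the identities from the proof of Theorem~\ref{thm:assrg}; subtracting gives $WW^\top=kI_v$.

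I expect the main obstacle to be the structural reconstruction of the third paragraph. Imprimitivity and the valency-one involution pin down the overall shape, but one must argue carefully that the two valency-$k$ off-diagonal relations admit a \emph{common} pair $(W_1,W_2)$ and that the resulting $W$ genuinely satisfies $WW^\top=kI_v$. This reduces to checking that the intersection numbers dictated by the prescribed eigenmatrices coincide with those forced by the two matrix identities above — a finite but delicate bookkeeping that is simply the forward computation of Theorem~\ref{thm:assrg} read in reverse, rather than an independent argument.
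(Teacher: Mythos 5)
Your proposal is correct and follows essentially the same route as the paper's own proof: recover the imprimitive two-copies-of-$K_{2v}$ structure from $A_1+A_2+A_3$, normalize the valency-one involution $A_1$, extract the strongly regular graph $A$ from the diagonal blocks of $A_2$, read off $W_1,W_2$ from the valency-$k$ off-diagonal relations $A_5,A_6$, and deduce $WW^\top=kI$ from exactly the two product identities you cite (equivalently $(A_5-A_6)^2=2k(A_0-A_1)$). The ``common pair'' obstacle you flag is settled in the paper just as you predict, by intersection relations forced by the eigenmatrices --- the paper first normalizes the valency-two relation $A_4$ to $\begin{pmatrix}0 & J_2\otimes I_v\\ J_2\otimes I_v & 0\end{pmatrix}$ (its square forces a permutation block) and then uses the products of $A_5$ with $A_1$ and $A_4$ to force the swapped pair and $W_1+W_2=A$.
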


\begin{proof}
Consider the matrix $A_1+A_2+A_3$. Its eigenvalues are $2v-1$ and $-1$ with multiplicities $2$ and $4v-2$, respectively. Then $A_1+A_2+A_3$ is the adjacency matrix of two copies of $K_{2v}$, the complete graph on $2v$ vertices. 

Next, by the eigenvalues of $A_1$, it is the adjacency matrix of a graph of disjoint $2v$ edges. We may assume that $A_1$ is of the form in \eqref{eq:a1srg}. 

Now, consider $A_4$. By the given eigenmatrices, we find that $A_1A_4=A_4A_1=A_4$, which yields 
$$
A_4=\begin{pmatrix}0 & J_2\otimes X \\ J_2\otimes X^\top & 0 \end{pmatrix}, 
$$
for some matrix $X$. 
Again by the eigenmatrices, we find $A_4^2=2A_0$ and thus $XX^\top=I$. 
Therefore, $X$ is a permutation matrix and suitably rearranging the vertices yields that $A_4$ is of the desired form in \eqref{eq:a4srg}.

Consider $A_2$. 
By the given eigenmatrices, we find that $A_1A_2=A_2A_1=A_2$ and $A_2A_4=A_4A_2$, which yields 
$$
A_2=\begin{pmatrix}J_2\otimes A & 0 \\ 0 & J_2\otimes A \end{pmatrix},  
$$
for some matrix $A$. 
Therefore, we have the form in \eqref{eq:a3srg}. 
Again by the eigenmatrices, we find $A_2^2=2(kA_0+kA_1+\lambda A_2+\mu A_3)$ and thus $A^2=kI+\lambda A +\mu(J-I-A)$. 
This shows that $A$ is the adjacency matrix of a strongly regular graph with parameters $(v,k,\lambda,\mu)$. 
  
By the given eigenmartrices, we find that $A_1A_4=A_5$, which yields 
$$A_5=\begin{pmatrix}  0 & 0 & W_1 & W_2 \\
 0 & 0 & W_2 & W_1 \\
 W_1^\top & W_2^\top & 0 & 0 \\
 W_2^\top & W_1^\top & 0 & 0
\end{pmatrix},
A_6=\begin{pmatrix} 0 & 0 & W_2 & W_1 \\
 0 & 0 & W_1 & W_2 \\
 W_2^\top & W_1^\top & 0 & 0 \\
 W_1^\top & W_2^\top & 0 & 0 
\end{pmatrix},$$ 
for some $(0,1)$-matrices $W_1$ and $W_2$. 
Again, by the eigenmatrices, we have that $A_4A_5=A_2$, that is, $W_1+W_2=A$, and that  
\begin{align*}
A_5^2&=kA_0+\frac{1}{2}(k+s+t+st)A_2+\frac{1}{2}(k+st)A_3, \\
A_5A_6&=A_6A_5=kA_1+\frac{1}{2}(k+s+t+st)A_2+\frac{1}{2}(k+st)A_3, \\
A_6^2&=kA_0+\frac{1}{2}(k+s+t+st)A_2+\frac{1}{2}(k+st)A_3.
\end{align*}
From these identities,  it follows that $(A_5-A_6)^2=2k(A_0-A_1)$, that is,  
\begin{align*}
(W_1-W_2)(W_1^\top-W_2^\top)&=kI. 
\end{align*}
Hence, $W_1-W_2$ is a quasi-balanced weighing matrix whose absolute matrix is a strongly regular graph  with the desired parameters.  
\end{proof}

\subsection{Association schemes from signed symmetric group divisible designs}

The constructed quasi-balanced weighing matrices $W$ in Theorems~\ref{thm:qbsgdd1} and \ref{thm:qbsgdd2} have the property that $|W|\J_{m,n}=\J_{m,n}|W|=\frac{k}{m}J_v$ or that $|W|\J_{m,n}=\J_{m,n}|W|=\frac{k}{m-1}(J_v-\J_{m,n})$.
In this subsection, we provide a relation between such quasi-balanced weighing matrices and association schemes.  

\subsubsection{The case that $|W|\J_{m,n}=\J_{m,n}|W|=\frac{k}{m}J_v$}

For a weighing matrix $W$, write $W=W_1-W_2$ where $W_1$ and $W_2$ are disjoint $(0,1)$-matrices. 
Then it readily follows that $W$ is a quasi-balanced whose absolute matrix is a symmetric group divisible design if and only if $W_1+W_2$ is the incidence matrix of a symmetric group divisible design with parameters $(v,k,m,n,\lambda_1,\lambda_2)$. 

Let $P=\begin{pmatrix}0 & 1 \\ 1 & 0\end{pmatrix}$. 
Define the adjacency matrices as follows:
\begin{align}
A_i&=\begin{pmatrix}
P^{i-1}\otimes I_v&0\\
0& P^{i-1}\otimes I_v
\end{pmatrix}\quad (0\leq i \leq 1) ,\label{eq:a1gdd1}\displaybreak[0]\\ 
A_2&=\begin{pmatrix}
J_2 \otimes (\J_{m,n}-I_v)&0\\
0&J_2 \otimes (\J_{m,n}-I_v)
\end{pmatrix},\label{eq:a2gdd1}\displaybreak[0]\\
A_{3}&=\begin{pmatrix}
J_2 \otimes (J_v-\J_{m,n})&0\\
0&J_2 \otimes (J_v-\J_{m,n})
\end{pmatrix},\label{eq:a3gdd1}\displaybreak[0]\\
A_{4}&=\begin{pmatrix}
0&I_2\otimes W_1+P\otimes W_2\\
I_2\otimes W_1^\top+P\otimes W_2^\top&0
\end{pmatrix},\nonumber\displaybreak[0]\\
A_{5}&=\begin{pmatrix}
0&I_2\otimes W_2+P\otimes W_1\\
I_2\otimes W_2^\top+P\otimes W_1^\top&0
\end{pmatrix},\nonumber\displaybreak[0]\\
A_{6}&=\begin{pmatrix}
0&J_2\otimes(J_v-W_1-W_2)\\
J_2\otimes(J_v-W_1^\top-W_2^\top)&0
\end{pmatrix}.\nonumber
\end{align}


\begin{theorem}\label{thm:asgdd1}
Let $W$ be a quasi-balanced weighing matrix such that $|W|$ is an $\sgdd(v,k,m,n,\lambda_1,\lambda_2)$ with the property that $|W|\J_{m,n}=\J_{m,n}|W|=\frac{k}{m}J_v$.
If $k<v$, then $\{A_i\}_{i=0}^6$ is an association scheme with the eigenmatrices $P$ and $Q$ given by 
\begin{align*}
P&=
\left(
\begin{array}{ccccccc}
 1 & 1 & 2 (n-1) & 2 (m-1) n & k & k & 2 (m n- k) \\
 1 & -1 & 0 & 0 & \sqrt{k} & -\sqrt{k} & 0 \\
 1 & 1 & 2 (n-1) & -2 n & 0 & 0 & 0 \\
 1 & -1 & 0 & 0 & -\sqrt{k} & \sqrt{k} & 0 \\
 1 & 1 & 2 (n-1) & 2 (m-1) n & -k & -k & 2 (k-m n) \\
 1 & 1 & -2 & 0 & \frac{\sqrt{k (m n-k)}}{\sqrt{m(n-1)}} & \frac{\sqrt{k (m n-k)}}{\sqrt{m(n-1)}} & -\frac{2 \sqrt{k (m n-k)}}{\sqrt{m(n-1)}} \\
 1 & 1 & -2 & 0 & -\frac{\sqrt{k (m n-k)}}{\sqrt{m(n-1)}} & -\frac{\sqrt{k (m n-k)}}{\sqrt{m(n-1)}} & \frac{2 \sqrt{k (m n-k)}}{\sqrt{m(n-1)}} \\
\end{array}
\right),\displaybreak[0]\\
Q&=\left(
\begin{array}{ccccccc}
 1 & m n & 2 (m-1) & m n & 1 & m (n-1) & m (n-1) \\
 1 & -m n & 2 (m-1) & -m n & 1 & m (n-1) & m (n-1) \\
 1 & 0 & 2 (m-1) & 0 & 1 & -m & -m \\
 1 & 0 & -2 & 0 & 1 & 0 & 0 \\
 1 & \frac{m n}{\sqrt{k}} & 0 & -\frac{m n}{\sqrt{k}} & -1 & \frac{\sqrt{m(n-1)(m n-k)}}{\sqrt{k}} & -\frac{\sqrt{m(n-1)(m n-k)}}{\sqrt{k}} \\
 1 & -\frac{m n}{\sqrt{k}} & 0 & \frac{m n}{\sqrt{k}} & -1 & \frac{\sqrt{m(n-1)(m n-k)}}{\sqrt{k}} & -\frac{\sqrt{m(n-1)(m n-k)}}{\sqrt{k}} \\
 1 & 0 & 0 & 0 & -1 & -\frac{\sqrt{km(n-1)}}{\sqrt{m n-k}} & \frac{\sqrt{km(n-1)}}{\sqrt{m n-k}} \\
\end{array}
\right).
\end{align*}
\end{theorem}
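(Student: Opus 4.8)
The plan is to follow the template already used for Theorem~\ref{thm:assrg}: first extract the bilinear identities governing $W_1,W_2$, then check the scheme axioms, and finally diagonalize the generators to read off $P$ and $Q$. Writing $W=W_1-W_2$ and $A=W_1+W_2$, I would start from $WW^\top=W^\top W=kI_v$ (a square weighing matrix is automatically normal) and $AA^\top=A^\top A=kI_v+\lambda_1(\J_{m,n}-I_v)+\lambda_2(J_v-\J_{m,n})$ (the $\sgdd$ relation, where $A^\top A=AA^\top$ uses quasi-balance). Adding and subtracting yields $W_1W_1^\top+W_2W_2^\top=kI_v+\tfrac12[\lambda_1(\J_{m,n}-I_v)+\lambda_2(J_v-\J_{m,n})]$ and $W_1W_2^\top+W_2W_1^\top=\tfrac12[\lambda_1(\J_{m,n}-I_v)+\lambda_2(J_v-\J_{m,n})]$, together with the transposed analogues. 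I would then record what the special hypothesis $\J_{m,n}A=A\J_{m,n}=\tfrac km J_v$ forces: expanding $\J_{m,n}AA^\top$ in two ways gives $k+\lambda_1(n-1)-\lambda_2 n=0$ and $\lambda_2=k^2/(mn)$, and expresses $(\J_{m,n}-I_v)A$, $(J_v-\J_{m,n})A$ as explicit combinations of $A$ and $J_v-A$, and $A(J_v-A^\top)$, $(J_v-A)(J_v-A^\top)$ in the basis $\{I_v,\J_{m,n}-I_v,J_v-\J_{m,n}\}$.

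Next I would verify the axioms. That $A_0=I_{4v}$, that each $A_i$ is symmetric, and that $\sum_{i=0}^6 A_i=J_{4v}$ are immediate from the block and tensor forms (noting $A_4+A_5$ contributes $J_2\otimes A$ and $A_6$ contributes $J_2\otimes(J_v-A)$). For closure I would compute the products $A_iA_j$ block-by-block using $P^2=I_2$, $J_2P=J_2$, $J_2^2=2J_2$ and the identities above. The diagonal generators $A_0,\dots,A_3$ close among themselves; the off-diagonal products $A_4^2,A_4A_5,A_4A_6,\dots$ reduce via the two $W_1,W_2$-identities (for instance $A_4A_5=kA_1+\tfrac{\lambda_1}2A_2+\tfrac{\lambda_2}2A_3$). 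The delicate mixed products $A_2A_4,A_3A_4,\dots$ close precisely because $\J_{m,n}A=\tfrac km J_v$ pushes $(\J_{m,n}-I_v)A$ and $(J_v-\J_{m,n})A$ into $\operatorname{span}\{A,\,J_v-A\}$, i.e. into the span of the off-diagonal blocks of $A_4+A_5$ and $A_6$. All resulting structure constants are non-negative integers: $m\mid k$ follows from the integrality of $\J_{m,n}A=\tfrac km J_v$, and $\tfrac{\lambda_1}2,\tfrac{\lambda_2}2\in\N$ follow from the integer entries of $W_1W_2^\top+W_2W_1^\top$.

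For the eigenmatrices I would diagonalize by writing the ambient space as $\mathbb{C}^2_{\mathrm{out}}\otimes\mathbb{C}^2_{\mathrm{in}}\otimes\mathbb{C}^v$, the last factor carrying the group-divisible scheme with eigenspaces $V_0,V_1,V_2$ of dimensions $1,m-1,m(n-1)$. Splitting the inner $\Z_2$ into $u_\pm$, one has $A_1=\pm1$ and $A_2,A_3,A_6$ vanishing on $u_-$, while the off-diagonal generator restricts to $A$ on $u_+$ (so $M_4=M_5=A$) and to $W$ on $u_-$ (so $M_4=W=-M_5$). On $u_-$, $WW^\top=kI_v$ gives singular value $\sqrt k$ throughout, and the outer $\Z_2$ produces the two classes of multiplicity $mn$ with $A_4=\pm\sqrt k$, $A_5=\mp\sqrt k$. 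On $u_+$, $M_4M_4^\top=AA^\top$ has eigenvalues $k^2,0,k-\lambda_1=\tfrac{k(mn-k)}{m(n-1)}$ on $V_0,V_1,V_2$; the outer $\Z_2$ splits the $V_0$- and $V_2$-parts into $\pm$ branches (classes $0/4$ and $5/6$), and $M_6=-2M_4$ on $u_+\otimes V_2$ fixes the $A_6$-signs. Reading the eigenvalue of each $A_i$ on each of the seven common eigenspaces assembles $P$, after which $Q$ follows from $PQ=4mn\,I$ with $Q_{0i}$ the multiplicities (equivalently from the orthogonality relations), or by invoking \cite[Theorem 4.1]{BI} exactly as in Theorem~\ref{thm:assrg}.

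The hard part will be the eigenvalue bookkeeping on $u_+$. One must see that the hypothesis forces the $V_1$-singular value of $A$ to vanish (this is the identity $k+\lambda_1(n-1)-\lambda_2 n=0$ extracted above), and that there both outer-$\Z_2$ branches collapse into a \emph{single} class of multiplicity $2(m-1)$ — rather than two classes of multiplicity $m-1$ — because $A_4,A_5,A_6$ all act as $0$; keeping the $\pm$ signs coherent across $A_4,A_5,A_6$ to reproduce the irrational entries of $P$ is the only genuinely subtle step. The hypothesis $k<v$ enters to guarantee $mn-k>0$, so that the seven classes remain distinct and the scheme does not degenerate.
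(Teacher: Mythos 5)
Your proposal is correct, and its first half coincides with the paper's own proof: the paper derives exactly your three bilinear identities for $W_1,W_2$, namely $W_1W_1^\top+W_2W_2^\top=kI_v+\tfrac{\lambda_1}{2}(\J_{m,n}-I_v)+\tfrac{\lambda_2}{2}(J_v-\J_{m,n})$, its counterpart for $W_1W_2^\top+W_2W_1^\top$, and $(W_1+W_2)\J_{m,n}=\J_{m,n}(W_1+W_2)=\tfrac{k}{m}J_v$, and then asserts that closure of the $A_i$'s ``follows readily,'' which is precisely the block-by-block check you outline. Where you genuinely diverge is the eigenmatrix computation: the paper tabulates the intersection matrix $B_4$ and invokes \cite[Theorem 4.1]{BI}, reading $P$ off the spectral data of $B_4$, whereas your primary route diagonalizes the Bose--Mesner algebra directly on $\mathbb{C}^2\otimes\mathbb{C}^2\otimes\mathbb{C}^{mn}$ using $u_\pm$ and the GDD eigenspaces $V_0,V_1,V_2$. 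Your bookkeeping is accurate: the relations you extract from the hypothesis, $\lambda_2=k^2/(mn)$ and $k+\lambda_1(n-1)=\lambda_2 n$ (so $\lambda_1=\tfrac{k(k-m)}{m(n-1)}$ and $k-\lambda_1=\tfrac{k(mn-k)}{m(n-1)}$), are exactly what force $A$ and $A^\top$ to annihilate $V_1$ and produce the irrational entries of $P$ on $u_+\otimes V_2$, and the collapse of both outer branches over $u_+\otimes V_1$ into a single class of multiplicity $2(m-1)$ is right; note these same relations are silently presupposed by the paper, whose $B_4$ contains the entries $\tfrac{k}{m}-1$ and $\tfrac{k}{m}$ (and hence $m\mid k$, which you justify). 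Your route costs more case analysis but buys the multiplicities and primitive idempotents explicitly (hence $Q$ via $PQ=4mn\,I$ without further appeal to \cite{BI}), and it makes visible where $k<v$ enters ($mn-k>0$ keeps the classes nondegenerate and $A_6\neq O$); the paper's route is shorter once $B_4$ is computed, and since you flag that alternative as well, your argument subsumes it.
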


\begin{proof}
Since $W=W_1-W_2$ is a weighing matrix and $W_1+W_2$ is the incidence matrix of a symmetric group divisible design with parameters $(v,k,m,n,\lambda_1,\lambda_2)$, we have 
\begin{align*}
W_1W_1^\top+W_2W_2^\top&=W_1^\top W_1+W_2^\top W_2=\frac{1}{2}((2k-\lambda_1)I_v+(\lambda_1-\lambda_2)\J_{m,n}+\lambda_2 J_v), \\
W_1W_2^\top+W_2W_1^\top&=W_1^\top W_2+W_2^\top W_1=\frac{1}{2}(-\lambda_1I_v+(\lambda_1-\lambda_2)\J_{m,n}+\lambda_2 J_v),\\
(W_1+W_2)\J_{m,n}&=\J_{m,n}(W_1+W_2)=\frac{k}{m}J_v. 
\end{align*}
It follows readily from the equations above that the $A_i$'s form an association scheme. 

It is straightforward to see that the intersection matrix $B_4$ is given by 
$$B_4=
\begin{pmatrix}
 0 & 0 & 0 & 0 & 1 & 0 & 0 \\
 0 & 0 & 0 & 0 & 0 & 1 & 0 \\
 0 & 0 & 0 & 0 & \frac{k}{m}-1 & \frac{k}{m}-1 & \frac{k}{m} \\
 0 & 0 & 0 & 0 & k-\frac{k}{m}  & k-\frac{k}{m} & k-\frac{k}{m}  \\
 k & 0 & \frac{\lambda_1}{2} & \frac{\lambda_2}{2} & 0 & 0 & 0 \\
 0 & k & \frac{\lambda_1}{2} & \frac{\lambda_2}{2} & 0 & 0 & 0 \\
 0 & 0 & k-\lambda_1 &  k-\lambda_2 & 0 & 0 & 0  \\
\end{pmatrix},$$
Apply \cite[Theorem 4.1]{BI} to this case to obtain the desired eigenmatrices. 
\end{proof}

\begin{theorem}\label{thm:asgddcar1}
Assume $mn>k$. 
If there exists an association scheme with the eigenmatrices given in Theorem~\ref{thm:asgdd1}, then there exists a quasi-balanced weighing matrix $W$ such that $|W|$ is a $\sgdd(v,k,m,n,\lambda_1,\lambda_2)$ with the property that $|W|\J_{m,n}=\J_{m,n}|W|=\frac{k}{m}J_v$.
\end{theorem}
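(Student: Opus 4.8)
The plan is to invert the construction, mirroring the recovery argument in the theorem following Theorem~\ref{thm:assrg}. Given the scheme, I first recompute all intersection numbers from $P$ and $Q$ (they agree with the matrix $B_4$ of Theorem~\ref{thm:asgdd1}), and then use the resulting products $A_iA_j=\sum_k p_{i,j}^kA_k$ to force each $A_i$ into its stated shape, up to a relabelling of the $4v$ vertices. The valencies in the first row of $P$ give $A_0+A_1+A_2+A_3=\operatorname{diag}(J_{2v},J_{2v})$ and $A_4+A_5+A_6=\left(\begin{smallmatrix}0&J_{2v}\\ J_{2v}&0\end{smallmatrix}\right)$, so the vertex set splits into two fibers of size $2v$ with $A_0,\dots,A_3$ supported on the fibers and $A_4,A_5,A_6$ between them. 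The column of $P$ for $A_1$ shows $A_1$ has valency $1$ and eigenvalues $\pm1$, hence is a perfect matching; a relabelling then puts $A_1$ into the form \eqref{eq:a1gdd1}, splitting each fiber into two halves of size $v$.

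Reading $A_1A_2=A_2$ and $A_1A_3=A_3$ forces the fiber blocks of $A_2,A_3$ to be invariant under $P\otimes I_v$, so they equal $J_2\otimes M_2$ and $J_2\otimes M_3$ for symmetric zero-diagonal $(0,1)$ matrices with $M_2+M_3=J_v-I_v$. The self-product $A_2^2$ then yields $M_2^2=(n-1)I_v+(n-2)M_2$, the relation characterizing a disjoint union of $m$ cliques of size $n$; hence $M_2=\J_{m,n}-I_v$ for an equipartition into $m$ classes of size $n$, which recovers \eqref{eq:a2gdd1}, \eqref{eq:a3gdd1} and $\J_{m,n}$ itself. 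For the off-diagonal part, write $A_4=\left(\begin{smallmatrix}0&Y\\ Y^\top&0\end{smallmatrix}\right)$ with $Y$ a $(0,1)$ matrix of constant row and column sum $k$. The relations $A_1A_4=A_5=A_4A_1$ give $(P\otimes I_v)Y=Y(P\otimes I_v)$, and a matrix commuting with the swap $P\otimes I_v$ is precisely one of the form $I_2\otimes W_1+P\otimes W_2$; this is the asserted shape of $A_4$, whereupon $A_5=A_1A_4$ and $A_6=J-A_0-\dots-A_5$ take their stated forms.

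Put $A:=W_1+W_2$. The scheme provides $A_4^2=kA_0+\tfrac{\lambda_1}{2}A_2+\tfrac{\lambda_2}{2}A_3$ and $A_4A_5=kA_1+\tfrac{\lambda_1}{2}A_2+\tfrac{\lambda_2}{2}A_3$. From $(A_4-A_5)^2=2k(A_0-A_1)$ I read $(W_1-W_2)(W_1-W_2)^\top=kI_v$, so $W:=W_1-W_2$ is a weighing matrix of weight $k$; from $(A_4+A_5)^2=2k(A_0+A_1)+2\lambda_1A_2+2\lambda_2A_3$ I read $AA^\top=A^\top A=(k-\lambda_1)I_v+(\lambda_1-\lambda_2)\J_{m,n}+\lambda_2J_v$, which is \eqref{eq:gdd}. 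Since each row of $A$ sums to $k$ while the diagonal of $AA^\top$ equals $k$, the equality $\sum_jA_{ij}^2=\sum_jA_{ij}$ forces $A$ to be a $(0,1)$-matrix; thus $W_1,W_2$ are disjoint and $|W|=W_1+W_2=A$ is an $\sgdd(v,k,m,n,\lambda_1,\lambda_2)$. Finally $A_4A_2=(\tfrac{k}{m}-1)(A_4+A_5)+\tfrac{k}{m}A_6$ translates into $A\J_{m,n}=\J_{m,n}A=\tfrac{k}{m}J_v$, completing the reconstruction.

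The step I expect to be most delicate is the coordinatization. Unlike the strongly regular case, this scheme contains no pure permutation class analogous to $A_4$ in \eqref{eq:a4srg}, so no single relation rigidly aligns the two fibers; one must instead verify that the relabellings normalizing $A_1$ on each fiber are mutually compatible through the off-diagonal class, which is exactly what the commutation $(P\otimes I_v)Y=Y(P\otimes I_v)$ secures, and that this simultaneously normalizes $A_2,\dots,A_6$. The hypothesis $mn>k$ is used to ensure $A_6\neq O$ (its valency $2(mn-k)$ is positive) and that the radicals $\sqrt{k(mn-k)}$ occurring in $P$ and $Q$ are real, so that the scheme genuinely realizes the six classes described.
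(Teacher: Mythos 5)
Your proposal is correct and follows essentially the same route as the paper's own proof: split the vertex set into two fibers, normalize the matching $A_1$, recover $A_2,A_3$ in the $\J_{m,n}$-form via the clique equation, deduce the block shapes of $A_4,A_5$ from $A_1A_4=A_5=A_4A_1$, and then read off $(A_4-A_5)^2=2k(A_0-A_1)$, $(A_4+A_5)^2$, and the product with $A_2$ to obtain $WW^\top=kI$, equation \eqref{eq:gdd}, and $|W|\J_{m,n}=\J_{m,n}|W|=\frac{k}{m}J_v$. Your small variants are equivalent to the paper's steps — you use $A_2^2=2(n-1)(A_0+A_1)+2(n-2)A_2$ where the paper uses $(A_0+A_1+A_2)^2=2n(A_0+A_1+A_2)$, and $A_2A_4=(\frac{k}{m}-1)(A_4+A_5)+\frac{k}{m}A_6$ where the paper invokes $(A_0+A_2)A_4=\frac{k}{m}(A_4+A_5+A_6)$ (which in fact should read $(A_0+A_1+A_2)A_4$, a slip your formulation avoids).
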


\begin{proof}
Consider the matrix $A_1+A_2+A_3$. Its eigenvalues are $2mn-1$ and $-1$ with multiplicities $2$ and $4mn-2$ respectively. Then $A_1+A_2+A_3$ is the adjacency matrix of two copies of $K_{2mn}$, the complete graph on $2mn$ vertices. 
Next, by the eigenvalues of $A_1$, it is the adjacency matrix of a graph of disjoint $2v$ edges.  
We may assume that $A_1$ is of the form in \eqref{eq:a1gdd1}.  
Thirdly consider $A_0+A_1+A_2$. 
By the given eigenmatrices, we find that $A_1(A_0+A_1+A_2)=(A_0+A_1+A_2)A_1=A_0+A_1+A_2$, which yields 
$$
A_0+A_1+A_2=\begin{pmatrix}J_2\otimes X & 0 \\ 0 & J_2\otimes Y \end{pmatrix},  
$$
for some matrices $X$ and $Y$. 
Again by the eigenmatrices, we find $(A_0+A_1+A_2)^2=2n(A_0+A_1+A_2)$ and thus $X^2=nX$ and $Y^2=nY$. This equation shows that $X=Y=\J_{m,n}$ after suitably permuting the columns and rows. 
Therefore we have the forms in \eqref{eq:a2gdd1} and \eqref{eq:a3gdd1}. 
  
By the given eigenmatrices, we find that $A_1A_4=A_5$, which yields 
$$A_4=\begin{pmatrix}  0 & 0 & W_1 & W_2 \\
 0 & 0 & W_2 & W_1 \\
 W_1^\top & W_2^\top & 0 & 0 \\
 W_2^\top & W_1^\top & 0 & 0
\end{pmatrix},
A_5=\begin{pmatrix} 0 & 0 & W_2 & W_1 \\
 0 & 0 & W_1 & W_2 \\
 W_2^\top & W_1^\top & 0 & 0 \\
 W_1^\top & W_2^\top & 0 & 0 
\end{pmatrix},$$ 
for some $(0,1)$-matrices $W_1$ and $W_2$. 
Again, by the eigenmatrices, we have that 
\begin{align*}
A_4^2&=kA_0+\frac{(k-m) k}{2m (n-1)}A_2+\frac{k^2}{2m n}A_3, \\
A_4A_5&=A_5A_4=kA_1+\frac{(k-m) k}{2m (n-1)}A_2+\frac{k^2}{2m n}A_3, \\
A_5^2&=kA_0+\frac{(k-m) k}{2m (n-1)}A_2+\frac{k^2}{2m n}A_3.
\end{align*}
From these identities, it follows that $(A_4-A_5)^2=2k(A_0-A_1), (A_4+A_5)^2=2k(A_0+A_1)+\frac{(k-m) k}{m (n-1)}A_2+\frac{k^2}{m n}A_3$, that is,  
\begin{align*}
(W_1-W_2)(W_1^\top-W_2^\top)&=kI_{mn},\\ 
(W_1+W_2)(W_1^\top+W_2^\top)=(W_1^\top+W_2^\top)(W_1+W_2)&=(k-\lambda)I_v+(\lambda_1-\lambda_2)\J_{m,n}+\lambda_2 J_{mn}. 
\end{align*}
Finally, 
\begin{align*}
(A_0+A_2)A_4=A_4(A_0+A_2)=\frac{k}{m}(A_4+A_5+A_6)
\end{align*}
implies that 
$$
(W_1+W_2)\J_{m,n}=\J_{m,n}(W_1+W_2)=\frac{k}{m}J_v. 
$$
Hence, $W_1-W_2$ is a quasi-balanced weighing matrix with the desired properties and parameters.  
\end{proof}

\subsubsection{The case that $|W|\J_{m,n}=\J_{m,n}|W|=\frac{k}{m-1}(J_v-\J_{m,n})$}

Let $W=W_1-W_2$ be a quasi-balanced weighing matrix with the property that $(W_1+W_2)\J_{m,n}=\J_{m,n}(W_1+W_2)=\frac{k}{m-1}(J_v-\J_{m,n})$. 

Let $P=\begin{pmatrix}0 & 1 \\ 1 & 0\end{pmatrix}$. 
Define the adjacency matrices as follows:
\begin{align}
A_i&=\begin{pmatrix}
P^{i-1}\otimes I_v&0\\
0& P^{i-1}\otimes I_v
\end{pmatrix}\quad (0\leq i \leq 1) ,\label{eq:a1gdd2}\\
A_2&=\begin{pmatrix}
J_2 \otimes (\J_{m,n}-I_v)&0\\
0&J_2 \otimes (\J_{m,n}-I_v)
\end{pmatrix},\label{eq:a2gdd2}\\
A_{3}&=\begin{pmatrix}
J_2 \otimes (J_v-\J_{m,n})&0\\
0&J_2 \otimes (J_v-\J_{m,n})
\end{pmatrix},\label{eq:a3gdd2}\\
A_{4}&=\begin{pmatrix}
0&I_2\otimes W_1+P\otimes W_2\\
I_2\otimes W_1^\top+P\otimes W_2^\top&0
\end{pmatrix},\nonumber\\
A_{5}&=\begin{pmatrix}
0&I_2\otimes W_2+P\otimes W_1\\
I_2\otimes W_2^\top+P\otimes W_1^\top&0
\end{pmatrix},\nonumber\\
A_{6}&=\begin{pmatrix}
0&J_2\otimes(J_v-W_1-W_2-\J_{m,n})\\
J_2\otimes(J_v-W_1^\top-W_2^\top-\J_{m,n})&0
\end{pmatrix},\nonumber\\
A_{7}&=\begin{pmatrix}
0&J_2\otimes \J_{m,n}\\
J_2\otimes \J_{m,n}&0
\end{pmatrix}. \label{eq:a3gdd7}
\end{align}


\begin{theorem}\label{thm:asgdd2}
Let $W$ be a quasi-balanced weighing matrix such that $|W|$ is an $\sgdd(v,k,m,n,\lambda_1,\lambda_2)$ with the property that $|W|\J_{m,n}=\J_{m,n}|W|=\frac{k}{m-1}(J_v-\J_{m,n})$.
If $k<v$, then $\{A_i\}_{i=0}^7$ is an association scheme with the eigenmatrices $P$ and $Q$ given by 
\begin{align*}
P&=
\arraycolsep=0.75pt
\left(
\begin{array}{cccccccc}
 1 & 1 & 2 (n-1) & 2 (m-1) n & k & k & 2 (-k+m n-n) & 2 n \\
 1 & -1 & 0 & 0 & \sqrt{k} & -\sqrt{k} & 0 & 0 \\
 1 & 1 & 2 (n-1) & -2 n & \frac{k}{1-m} & \frac{k}{1-m} & \frac{2 (k-m n+n)}{m-1} & 2 n \\
 1 & 1 & 2 (n-1) & -2 n & \frac{k}{m-1} & \frac{k}{m-1} & -\frac{2 (k-m n+n)}{m-1} & -2 n \\
 1 & -1 & 0 & 0 & -\sqrt{k} & \sqrt{k} & 0 & 0 \\
 1 & 1 & 2 (n-1) & 2 (m-1) n & -k & -k & 2 (k-m n+n) & -2 n \\
 1 & 1 & -2 & 0 & -\frac{\sqrt{k(m n-n-k)}}{ \sqrt{(m-1)(n-1)}} & -\frac{\sqrt{k(m n-n-k)}}{ \sqrt{(m-1)(n-1)}} & \frac{2\sqrt{k(m n-n-k)}}{ \sqrt{(m-1)(n-1)}} & 0 \\
 1 & 1 & -2 & 0 & \frac{\sqrt{k(m n-n-k)}}{ \sqrt{(m-1)(n-1)}} & \frac{\sqrt{k(m n-n-k)}}{ \sqrt{(m-1)(n-1)}} & -\frac{2\sqrt{k(m n-n-k)}}{ \sqrt{(m-1)(n-1)}} & 0 \\
\end{array}
\right), \\
Q&=
\arraycolsep=2.0pt
\left(
\begin{array}{cccccccc}
 1 & m n & m-1 & m-1 & m n & 1 & m (n-1) & m (n-1) \\
 1 & -m n & m-1 & m-1 & -m n & 1 & m (n-1) & m (n-1) \\
 1 & 0 & m-1 & m-1 & 0 & 1 & -m & -m \\
 1 & 0 & -1 & -1 & 0 & 1 & 0 & 0 \\
 1 & \frac{m n}{\sqrt{k}} & -1 & 1 & -\frac{m n}{\sqrt{k}} & -1 & -\frac{m \sqrt{(n-1)(mn-n-k)}}{\sqrt{k(m-1)}} & \frac{m \sqrt{(n-1)(mn-n-k)}}{\sqrt{k(m-1)}} \\
 1 & -\frac{m n}{\sqrt{k}} & -1 & 1 & \frac{m n}{\sqrt{k}} & -1 & -\frac{m \sqrt{(n-1)(mn-n-k)}}{\sqrt{k(m-1)}} & \frac{m \sqrt{(n-1)(mn-n-k)}}{\sqrt{k(m-1)}} \\
 1 & 0 & -1 & 1 & 0 & -1 & \frac{m \sqrt{k(n-1)}}{ \sqrt{(m-1)(mn-n-k)}} & -\frac{m \sqrt{k(n-1)}}{ \sqrt{(m-1)(mn-n-k)}} \\
 1 & 0 & m-1 & -m+1 & 0 & -1 & 0 & 0 \\
\end{array}
\right).
\end{align*}
\end{theorem}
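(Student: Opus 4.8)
The plan is to follow the proof of Theorem~\ref{thm:asgdd1} almost verbatim, the only structural novelty being the presence of the eighth class $A_7$ in \eqref{eq:a3gdd7}, which is forced precisely because the hypothesis $|W|\J_{m,n}=\J_{m,n}|W|=\frac{k}{m-1}(J_v-\J_{m,n})$ sends the within-group pattern $\J_{m,n}$ into the \emph{between-group} pattern rather than into a multiple of $J_v$. First I would write $W=W_1-W_2$ with $W_1,W_2$ disjoint $(0,1)$-matrices, so that $|W|=W_1+W_2$ is the incidence matrix of the $\sgdd$. Taking the sum and difference of $WW^\top=W^\top W=kI_v$ with the defining relation $|W||W|^\top=|W|^\top|W|=(k-\lambda_1)I_v+(\lambda_1-\lambda_2)\J_{m,n}+\lambda_2 J_v$ yields
\begin{align*}
W_1W_1^\top+W_2W_2^\top&=W_1^\top W_1+W_2^\top W_2=\frac{1}{2}((2k-\lambda_1)I_v+(\lambda_1-\lambda_2)\J_{m,n}+\lambda_2 J_v),\\
W_1W_2^\top+W_2W_1^\top&=W_1^\top W_2+W_2^\top W_1=\frac{1}{2}(-\lambda_1I_v+(\lambda_1-\lambda_2)\J_{m,n}+\lambda_2 J_v),
\end{align*}
which, together with the retained hypothesis $(W_1+W_2)\J_{m,n}=\J_{m,n}(W_1+W_2)=\frac{k}{m-1}(J_v-\J_{m,n})$, constitute all the algebraic input required.

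The second step is to verify closure, i.e.\ that every product $A_iA_j$ is a non-negative integer combination of $A_0,\dots,A_7$. The decisive simplification is that whenever the within-group factor $\J_{m,n}$ (carried by $A_7$) meets the weighing blocks of $A_4,A_5,A_6$, the product collapses onto the sum $W_1+W_2$: since $PJ_2=J_2P=J_2$, one has $(I_2\otimes W_1+P\otimes W_2)(J_2\otimes\J_{m,n})=J_2\otimes((W_1+W_2)\J_{m,n})$, and similarly $(J_2\otimes(\J_{m,n}-I_v))(I_2\otimes W_1+P\otimes W_2)=J_2\otimes((\J_{m,n}-I_v)(W_1+W_2))$. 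Thus no information about $W_1,W_2$ individually, beyond the two displayed identities, is ever needed, and each product reduces---using $\J_{m,n}(W_1+W_2)=\frac{k}{m-1}(J_v-\J_{m,n})$ to re-expand the results in the block patterns of the $A_k$'s---to a combination with coefficients such as $\frac{k}{m-1}-1$, which are non-negative integers thanks to the divisibility built into the hypotheses. As in Theorem~\ref{thm:asgdd1} I would record only the intersection matrix $B_4$ of the weighing class $A_4$; it has the same shape as the one displayed there, with $\frac{k}{m}$ replaced by $\frac{k}{m-1}$ and with an extra row and column recording the interaction with $A_7$.

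Finally, having $B_4$ (equivalently the full parameter set of the scheme), I would invoke \cite[Theorem~4.1]{BI} to pass from the intersection numbers to the first and second eigenmatrices $P$ and $Q$ stated in the theorem; the hypothesis $k<v$ (that is, $k<mn$) is what guarantees that the quantities under the square roots in $P$ and $Q$, such as $mn-n-k$, are positive, so that all entries are real. The main obstacle is not conceptual but bookkeeping: the scheme now has eight classes on $4v$ vertices, and the products mixing the three ``weighing'' classes $A_4,A_5,A_6$ with the within-group class $A_7$ must all be carried out and checked to close with non-negative integer structure constants. Organizing these so that the $\J_{m,n}$-collapses described above are applied uniformly, and so that the extra class $A_7$ is correctly threaded through every product, is where the care lies.
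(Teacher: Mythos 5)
Your proposal follows the paper's own proof essentially verbatim: derive the same three displayed identities from $W=W_1-W_2$ being a weighing matrix whose absolute matrix is the $\sgdd$, check closure of the $A_i$'s, record the intersection matrix $B_4$, and apply \cite[Theorem 4.1]{BI} to obtain $P$ and $Q$. One small caveat: $B_4$ here is not obtained from the one in Theorem~\ref{thm:asgdd1} merely by substituting $\frac{k}{m-1}$ for $\frac{k}{m}$ and bordering by a row and column --- e.g.\ the entry $k-\lambda_2$ becomes $k-\lambda_2-\frac{k}{m-1}$ since part of the $A_3$-interaction is absorbed by $A_7$ --- but the block computations you outline would produce this correctly.
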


\begin{proof}
Since $W=W_1-W_2$ is a weighing matrix and $W_1+W_2$ is the incidence matrix of a symmetric group divisible design with parameters $(v,k,m,n,\lambda_1,\lambda_2)$, we have 
\begin{align*}
W_1W_1^\top+W_2W_2^\top&=W_1^\top W_1+W_2^\top W_2=\frac{1}{2}((2k-\lambda_1)I_v+(\lambda_1-\lambda_2)\J_{m,n}+\lambda_2 J_v), \\
W_1W_2^\top+W_2W_1^\top&=W_1^\top W_2+W_2^\top W_1=\frac{1}{2}(-\lambda_1I_v+(\lambda_1-\lambda_2)\J_{m,n}+\lambda_2 J_v),\\
(W_1+W_2)\J_{m,n}&=\J_{m,n}(W_1+W_2)=\frac{k}{m-1}(J_v-\J_{m,n}).
\end{align*}
It follows readily from the equations above that the $A_i$'s form an association scheme. 

It is straightforward to see that the intersection matrix $B_4$ is given by 
$$B_4=
\begin{pmatrix}
 0 & 0 & 0 & 0 & 1 & 0 & 0 & 0 \\
 0 & 0 & 0 & 0 & 0 & 1 & 0 & 0 \\
 0 & 0 & 0 & 0 & \frac{k}{m-1}-1 & \frac{k}{m-1}-1 &  \frac{k}{m-1} & 0 \\
 0 & 0 & 0 & 0 & k-\frac{k}{m-1}  & k-\frac{k}{m-1}  & k-\frac{k}{m-1} & k \\
 k & 0 & \frac{\lambda_1}{2} & \frac{\lambda_2}{2} & 0 & 0  & 0 & 0 \\
 0 & k & \frac{\lambda_1}{2} & \frac{\lambda_2}{2} & 0 & 0  & 0 & 0 \\
 0 & 0 & k-\lambda_1 & k-\lambda_2-\frac{k}{m-1} & 0 & 0  & 0 & 0 \\ 
 0 & 0 & 0 & \frac{k}{m-1} & 0 & 0 & 0 & 0 
\end{pmatrix},$$
Apply \cite[Theorem 4.1]{BI} to this case to obtain the desired eigenmatrices. 
\end{proof}

\begin{theorem}
Assume $mn>k$. 
If there exists an association scheme with the eigenmatrices given in Theorem~\ref{thm:asgdd2}, then there exists a quasi-balanced weighing matrix $W$ with the property that $|W|$ is a symmetric group divisible design with parameters $(v,k,m,n,\lambda_1,\lambda_2)$ and $|W|\J_{m,n}=\J_{m,n}|W|=\frac{k}{m-1}(J_v-\J_{m,n})$.
\end{theorem}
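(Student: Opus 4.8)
The plan is to run the same reconstruction that proves Theorem~\ref{thm:asgddcar1}, now carrying along the extra class $A_7$. First I would recover the block-diagonal part of the scheme directly from the displayed $P$. Reading column $1+2+3$ of $P$ shows that $A_1+A_2+A_3$ has only the eigenvalues $2mn-1$ (on the eigenspaces indexed $0$ and $5$, each of multiplicity $1$ by the first row of $Q$) and $-1$, so it is the adjacency matrix of two disjoint copies of $K_{2mn}$; similarly $A_1$ has eigenvalues $\pm1$ with $A_1^2=A_0$, so it is a perfect matching of $2v$ disjoint edges and may be normalized to the form \eqref{eq:a1gdd2}. The relation $A_1(A_0+A_1+A_2)=(A_0+A_1+A_2)A_1=A_0+A_1+A_2$, read from the eigenmatrices, forces $A_0+A_1+A_2=\mathrm{diag}(J_2\otimes X,\,J_2\otimes Y)$ for symmetric $(0,1)$-matrices $X,Y$, and $(A_0+A_1+A_2)^2=2n(A_0+A_1+A_2)$ then gives $X^2=nX$ and $Y^2=nY$. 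Since $X,Y$ are $(0,1)$ and have no zero rows, each has constant row sum $n$ and satisfies the idempotency $(X^2)_{ij}=nX_{ij}$, so each is a disjoint union of $m$ all-ones blocks of order $n$; after a simultaneous row/column permutation $X=Y=\J_{m,n}$, which yields the forms \eqref{eq:a2gdd2}, \eqref{eq:a3gdd2}, and pins $A_7$ to \eqref{eq:a3gdd7}.

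Next I would recover the off-diagonal classes. The relation $A_1A_4=A_5$ forces $A_4$ and $A_5$ into block forms whose single nonzero block is $I_2\otimes W_1+P\otimes W_2$ and $I_2\otimes W_2+P\otimes W_1$ respectively, for disjoint $(0,1)$-matrices $W_1,W_2$. Computing $A_4^2$, $A_4A_5=A_5A_4$, and $A_5^2$ from the eigenmatrices and then forming $(A_4-A_5)^2$ and $(A_4+A_5)^2$ gives, exactly as in Theorem~\ref{thm:asgddcar1}, the identities $(W_1-W_2)(W_1-W_2)^\top=kI_{mn}$ and the symmetric group divisible design relation $(W_1+W_2)(W_1+W_2)^\top=(k-\lambda_1)I_v+(\lambda_1-\lambda_2)\J_{m,n}+\lambda_2J_{mn}$ with the stated parameters. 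Hence $W:=W_1-W_2$ is a weighing matrix whose absolute value $|W|=W_1+W_2$ is an $\sgdd(v,k,m,n,\lambda_1,\lambda_2)$.

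The genuinely new ingredient is extracting the block-regularity $|W|\J_{m,n}=\J_{m,n}|W|=\frac{k}{m-1}(J_v-\J_{m,n})$, and here the class $A_7=J_2\otimes\J_{m,n}$ does the work. Since products of two off-diagonal classes are block-diagonal, $A_4A_7$ expands in $A_0,A_1,A_2,A_3$ only, and the intersection numbers read from $P,Q$ give $A_4A_7=\frac{k}{m-1}A_3$. On the other hand, using $I_2J_2=PJ_2=J_2$ directly gives $A_4A_7=\mathrm{diag}\bigl(J_2\otimes((W_1+W_2)\J_{m,n}),\,J_2\otimes((W_1+W_2)\J_{m,n})\bigr)$. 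Comparing the two block expressions forces $(W_1+W_2)\J_{m,n}=\frac{k}{m-1}(J_v-\J_{m,n})$, and the commuting version follows from the symmetry of the design together with $\J_{m,n}^\top=\J_{m,n}$. This completes all the required properties of $W$.

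I expect the main obstacle to be the bookkeeping forced by the extra class: unlike the $\frac{k}{m}J_v$ case, the off-diagonal ``non-design'' part is now split between $A_6$ and $A_7$, so one must verify that the displayed $P$ and $Q$ genuinely separate $\J_{m,n}$ from its complement and assign $A_4A_7$ the single-term value $\frac{k}{m-1}A_3$ rather than a combination also involving $A_2$. The other delicate point is ensuring that the simultaneous permutation normalizing $X=Y=\J_{m,n}$ can be chosen compatibly with the normalizations of $A_1$ and of $A_4,A_5$ across all classes at once; the hypothesis $mn>k$ is precisely what keeps the relevant eigenvalues distinct and nonzero, so that these reconstructions are forced rather than merely permitted.
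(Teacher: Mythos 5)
Your proposal follows the paper's route step for step, and most of it is correct, but there is one genuine gap, and it sits at the single point where this proof differs from that of Theorem~\ref{thm:asgddcar1}: the identification of $A_7$. You assert that the permutation normalizing $X=Y=\J_{m,n}$ inside $A_0+A_1+A_2$ ``pins $A_7$ to \eqref{eq:a3gdd7}.'' It does not: that normalization concerns only the block-diagonal classes, while $A_7$ is an off-diagonal class, a priori just some symmetric $(0,1)$-matrix of valency $2n$ supported between the two halves. The paper's only substantive new content in this proof is exactly the missing argument: from the eigenmatrices one reads $A_1A_7=A_7$, which forces $A_7=(J_2-I_2)\otimes J_2\otimes X$ for a symmetric $(0,1)$-matrix $X$, and the spectrum of $A_7$ (column $7$ of $P$ together with the multiplicities from $Q$) forces $X$ to have eigenvalues $n$ and $0$ with multiplicities $m$ and $mn-m$, whence $X=\J_{m,n}$ after rearranging vertices. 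Without this, your concluding comparison is unsupported, since equating $A_4A_7$ with $\frac{k}{m-1}A_3$ only yields the block-regularity if the $\J_{m,n}$ in $A_7$ is the \emph{same} $\J_{m,n}$ appearing in the normalized $A_2,A_3$ --- precisely the ``delicate point'' you flag at the end but never resolve. A clean way to secure the compatibility: the eigenmatrices also give $A_2A_7=2(n-1)A_7$, which with the normalized $A_2$ yields $\J_{m,n}X=X\J_{m,n}=nX$; since the row and column sums of $X$ are $n$, each row and column of $X$ is the indicator vector of a single group, so $X=\Sigma\otimes J_n$ for a permutation matrix $\Sigma$ of order $m$, and permuting whole groups on one side (harmless, as $W_1,W_2$ are not yet normalized) makes $\Sigma=I_m$.

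Apart from this, your reconstruction agrees with the paper: the recovery of \eqref{eq:a1gdd2}--\eqref{eq:a3gdd2}, the forms of $A_4,A_5$ from $A_1A_4=A_5$, and the identities $(A_4-A_5)^2=2k(A_0-A_1)$ and $(A_4+A_5)^2$ giving the weighing-matrix and group-divisible-design relations are exactly the paper's steps (imported, as the paper says, from Theorem~\ref{thm:asgddcar1}). Your finishing identity $A_4A_7=\frac{k}{m-1}A_3$ does match the last row of the intersection matrix $B_4$ and is a legitimate, arguably more direct, variant of the paper's $(A_0+A_2)A_4$-style computation. One small correction there: the lower diagonal block of $A_4A_7$ is $J_2\otimes\bigl((W_1^\top+W_2^\top)\J_{m,n}\bigr)$, not a repeat of the upper one, so comparing the two blocks of $\frac{k}{m-1}A_3$ directly gives \emph{both} $|W|\J_{m,n}$ and, after transposing, $\J_{m,n}|W|$; no appeal to ``symmetry of the design'' is needed (nor available --- the dual property of a symmetric group divisible design does not make its incidence matrix symmetric).
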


\begin{proof}
In a similar manner, it is shown that  we have the forms in  \eqref{eq:a1gdd2},  \eqref{eq:a2gdd2}, and \eqref{eq:a3gdd2}. 
  
By the given eigenmatrices, we find that $A_1A_7=A_7$, which yields 
$$A_7=\begin{pmatrix}  0 & 0 & X & X \\
 0 & 0 & X & X \\
 X & X & 0 & 0 \\
 X & X & 0 & 0
\end{pmatrix}=(J_2-I_2)\otimes J_2 \otimes X,$$ 
for some symmetric $(0,1)$-matrix $X$. 
By the eigenvalues of $A_7$, the matrix $X$ has the eigenvalues $n$ and $0$ with multiplicities $m$ and $mn-m$, respectively. By suitably rearranging the vertices, we have  the form in \eqref{eq:a3gdd7}. 
The rest is same as proof of Theorem~\ref{thm:asgddcar1}.
\end{proof}

\section*{Acknowledgments.}
Hadi Kharaghani is supported by the Natural Sciences and
Engineering  Research Council of Canada (NSERC).  Sho Suda is supported by JSPS KAKENHI Grant Number 18K03395.
\begin{rezabib}
\bib{arasu}{article}{
   author={Arasu, K. T.},
   author={Dillon, J. F.},
   author={Leung, Ka Hin},
   author={Ma, Siu Lun},
   title={Cyclic relative difference sets with classical parameters},
   journal={J. Combin. Theory Ser. A},
   volume={94},
   date={2001},
   number={1},
   pages={118--126},
   issn={0097-3165},
   review={\MR{1816250}},
   doi={10.1006/jcta.2000.3137},
}

\bib{BI}{book}{
   author={Bannai, Eiichi},
   author={Ito, Tatsuro},
   title={Algebraic combinatorics. I},
   note={Association schemes},
   publisher={The Benjamin/Cummings Publishing Co., Inc., Menlo Park, CA},
   date={1984},
   pages={xxiv+425},
   isbn={0-8053-0490-8},
   review={\MR{882540}},
}

\bib{BJL}{book}{
   author={Beth, Thomas},
   author={Jungnickel, Dieter},
   author={Lenz, Hanfried},
   title={Design theory. Vols. I,II},
   series={Encyclopedia of Mathematics and its Applications},
   volume={69},
   edition={2},
   publisher={Cambridge University Press, Cambridge},
   date={1999},
   pages={xx+1100},
   isbn={0-521-44432-2},
   review={\MR{1729456}},
   doi={10.1017/CBO9780511549533},
}

\bib{BCN}{book}{
   author={Brouwer, A. E.},
   author={Cohen, A. M.},
   author={Neumaier, A.},
   title={Distance-regular graphs},
   series={Ergebnisse der Mathematik und ihrer Grenzgebiete (3) [Results in
   Mathematics and Related Areas (3)]},
   volume={18},
   publisher={Springer-Verlag, Berlin},
   date={1989},
   pages={xviii+495},
   isbn={3-540-50619-5},
   review={\MR{1002568}},
   doi={10.1007/978-3-642-74341-2},
}

\bib{BVM}{book}{
   author={Brouwer, Andries E.},
   author={Van Maldeghem, H.},
   title={Strongly regular graphs},
   series={Encyclopedia of Mathematics and its Applications},
   volume={182},
   publisher={Cambridge University Press, Cambridge},
   date={2022},
   pages={xvii+462},
   isbn={978-1-316-51203-6},
   review={\MR{4350112}},
}

\bib{CVL}{book}{
   author={Cameron, P. J.},
   author={van Lint, J. H.},
   title={Designs, graphs, codes and their links},
   series={London Mathematical Society Student Texts},
   volume={22},
   publisher={Cambridge University Press, Cambridge},
   date={1991},
   pages={x+240},
   isbn={0-521-41325-7},
   isbn={0-521-42385-6},
   review={\MR{1148891}},
   doi={10.1017/CBO9780511623714},
}

\bib{DLF}{book}{
   author={de Launey, Warwick},
   author={Flannery, Dane},
   title={Algebraic design theory},
   series={Mathematical Surveys and Monographs},
   volume={175},
   publisher={American Mathematical Society, Providence, RI},
   date={2011},
   pages={xii+298},
   isbn={978-0-8218-4496-0},
   review={\MR{2815992}},
   doi={10.1090/surv/175},
}

\bib{gm1}{article}{
   author={Gibbons, P. B.},
   author={Mathon, R.},
   title={Signings of group divisible designs and projective planes},
   journal={Australas. J. Combin.},
   volume={11},
   date={1995},
   pages={79--104},
   issn={1034-4942},
   review={\MR{1327323}},
}

\bib{gm2}{article}{
   author={Gibbons, Peter B.},
   author={Mathon, Rudolf},
   title={Construction methods for Bhaskar Rao and related designs},
   journal={J. Austral. Math. Soc. Ser. A},
   volume={42},
   date={1987},
   number={1},
   pages={5--30},
   issn={0263-6115},
   review={\MR{862718}},
}
 
 \bib{gm3}{article}{
   author={Gibbons, Peter B.},
   author={Mathon, Rudolf A.},
   title={Group signings of symmetric balanced incomplete block designs},
   booktitle={Proceedings of the Singapore conference on combinatorial
   mathematics and computing (Singapore, 1986)},
   journal={Ars Combin.},
   volume={23},
   date={1987},
   number={A},
   pages={123--134},
   issn={0381-7032},
   review={\MR{890133}},
}
 
 \bib{IS}{book}{
   author={Ionin, Yury J.},
   author={Shrikhande, Mohan S.},
   title={Combinatorics of symmetric designs},
   series={New Mathematical Monographs},
   volume={5},
   publisher={Cambridge University Press, Cambridge},
   date={2006},
   pages={xiv+520},
   isbn={978-0-521-81833-9},
   isbn={0-521-81833-8},
   review={\MR{2234039}},
   doi={10.1017/CBO9780511542992},
}

\bib{JK}{article}{
   author={Jungnickel, Dieter},
   author={Kharaghani, H.},
   title={Balanced generalized weighing matrices and their applications},
   journal={Matematiche (Catania)},
   volume={59},
   date={2004},
   number={1-2},
   pages={225--261 (2006)},
   issn={0373-3505},
   review={\MR{2243033}},
}
 
\bib{jt}{article}{
   author={Jungnickel, Dieter},
   author={Tonchev, Vladimir D.},
   title={Perfect codes and balanced generalized weighing matrices. II},
   journal={Finite Fields Appl.},
   volume={8},
   date={2002},
   number={2},
   pages={155--165},
   issn={1071-5797},
   review={\MR{1894509}},
   doi={10.1006/ffta.2001.0327},
}

\bib{jt-ii}{article}{
   author={Jungnickel, Dieter},
   author={Tonchev, Vladimir D.},
   title={Perfect codes and balanced generalized weighing matrices. II},
   conference={
      title={International Workshop on Coding and Cryptography},
      address={Paris},
      date={2001},
   },
   book={
      series={Electron. Notes Discrete Math.},
      volume={6},
      publisher={Elsevier Sci. B. V., Amsterdam},
   },
   date={2001},
   pages={10},
   review={\MR{1985233}},
}

 \bib{kt}{article}{
   author={Kharaghani, H.},
   author={Torabi, R.},
   title={On a decomposition of complete graphs},
   journal={Graphs Combin.},
   volume={19},
   date={2003},
   number={4},
   pages={519--526},
   issn={0911-0119},
   review={\MR{2031006}},
   doi={10.1007/s00373-003-0527-y},
}
 
\bib{kps21}{article}{
    author={Kharaghani, Hadi},
    author={Pender, Thomas},
    author={Suda, Sho},
    title={A family of balanced generalized weighing matrices, },
    journal={to appear in {\it Combinatorica}},
}

\bib{krw}{article}{
   author={Klin, Mikhail},
   author={Reichard, Sven},
   author={Woldar, Andrew},
   title={Siamese objects, and their relation to color graphs, association
   schemes and Steiner designs},
   journal={Bull. Belg. Math. Soc. Simon Stevin},
   volume={12},
   date={2005},
   number={5},
   pages={845--857},
   issn={1370-1444},
   review={\MR{2241348}},
}

\bib{lam-leung}{article}{
   author={Lam, T. Y.},
   author={Leung, K. H.},
   title={On vanishing sums of roots of unity},
   journal={J. Algebra},
   volume={224},
   date={2000},
   number={1},
   pages={91--109},
   issn={0021-8693},
   review={\MR{1736695}},
   doi={10.1006/jabr.1999.8089},
}

\bib{mult-num}{book}{
   author={Montgomery, Hugh L.},
   author={Vaughan, Robert C.},
   title={Multiplicative number theory. I. Classical theory},
   series={Cambridge Studies in Advanced Mathematics},
   volume={97},
   publisher={Cambridge University Press, Cambridge},
   date={2007},
   pages={xviii+552},
   isbn={978-0-521-84903-6},
   isbn={0-521-84903-9},
   review={\MR{2378655}},
}

 \bib{sch}{article}{
   author={Schellenberg, Paul J.},
   title={A computer construction for balanced orthogonal matrices},
   conference={
      title={Proceedings of the Sixth Southeastern Conference on
      Combinatorics, Graph Theory and Computing (Florida Atlantic Univ.,
      Boca Raton, Fla., 1975)},
   },
   book={
      publisher={Utilitas Math., Winnipeg, Man.},
   },
   date={1975},
   pages={513--522. Congressus Numerantium, No. XIV},
   review={\MR{0392606}},
}

\bib{OD}{article}{
   author={Seberry, Jennifer},
   title={Hadamard matrices, orthogonal designs and
   Clifford--Gastineau-Hills algebras},
   journal={Australas. J. Combin.},
   volume={71},
   date={2018},
   pages={452--467},
   issn={1034-4942},
   review={\MR{3801276}},
}

\bib{Tonchev}{book}{
   author={Tonchev, Vladimir D.},
   title={Combinatorial configurations: designs, codes, graphs},
   series={Pitman Monographs and Surveys in Pure and Applied Mathematics},
   volume={40},
   note={Translated from the Bulgarian by Robert A. Melter},
   publisher={Longman Scientific \& Technical, Harlow; John Wiley \& Sons,
   Inc., New York},
   date={1988},
   pages={xii+189},
   isbn={0-582-99483-7},
   review={\MR{940701}},
}
\end{rezabib}

\section*{Appendix}

\begin{figure}[H]
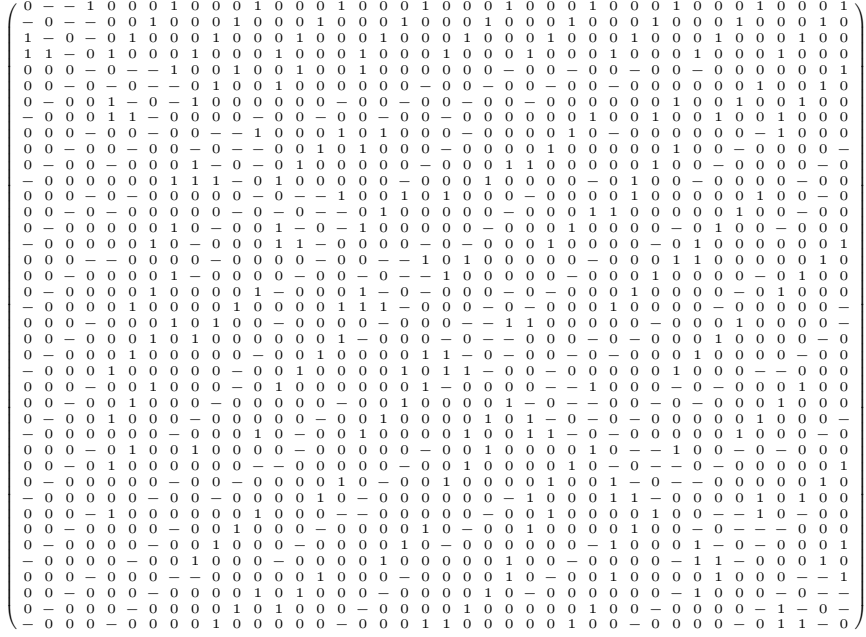

\begin{tiny}
 \[
 \arraycolsep=1.25pt\def\arraystretch{1}
  \left(\begin{array}{cccccccccccccccccccccccccccccccccccccccc}
0&-&-&1&0&0&0&1&0&0&0&1&0&0&0&1&0&0&0&1&0&0&0&1&0&0&0&1&0&0&0&1&0&0&0&1&0&0&0&1\\
-&0&-&-&0&0&1&0&0&0&1&0&0&0&1&0&0&0&1&0&0&0&1&0&0&0&1&0&0&0&1&0&0&0&1&0&0&0&1&0\\
1&-&0&-&0&1&0&0&0&1&0&0&0&1&0&0&0&1&0&0&0&1&0&0&0&1&0&0&0&1&0&0&0&1&0&0&0&1&0&0\\
1&1&-&0&1&0&0&0&1&0&0&0&1&0&0&0&1&0&0&0&1&0&0&0&1&0&0&0&1&0&0&0&1&0&0&0&1&0&0&0\\
0&0&0&-&0&-&-&1&0&0&1&0&0&1&0&0&1&0&0&0&0&0&0&-&0&0&-&0&0&-&0&0&-&0&0&0&0&0&0&1\\
0&0&-&0&-&0&-&-&0&1&0&0&1&0&0&0&0&0&0&-&0&0&-&0&0&-&0&0&-&0&0&0&0&0&0&1&0&0&1&0\\
0&-&0&0&1&-&0&-&1&0&0&0&0&0&0&-&0&0&-&0&0&-&0&0&-&0&0&0&0&0&0&1&0&0&1&0&0&1&0&0\\
-&0&0&0&1&1&-&0&0&0&0&-&0&0&-&0&0&-&0&0&-&0&0&0&0&0&0&1&0&0&1&0&0&1&0&0&1&0&0&0\\
0&0&0&-&0&0&-&0&0&-&-&1&0&0&0&1&0&1&0&0&0&-&0&0&0&0&1&0&-&0&0&0&0&0&0&-&1&0&0&0\\
0&0&-&0&0&-&0&0&-&0&-&-&0&0&1&0&1&0&0&0&-&0&0&0&0&1&0&0&0&0&0&1&0&0&-&0&0&0&0&-\\
0&-&0&0&-&0&0&0&1&-&0&-&0&1&0&0&0&0&0&-&0&0&0&1&1&0&0&0&0&0&1&0&0&-&0&0&0&0&-&0\\
-&0&0&0&0&0&0&1&1&1&-&0&1&0&0&0&0&0&-&0&0&0&1&0&0&0&0&-&0&1&0&0&-&0&0&0&0&-&0&0\\
0&0&0&-&0&-&0&0&0&0&0&-&0&-&-&1&0&0&1&0&1&0&0&0&-&0&0&0&0&1&0&0&0&0&0&1&0&0&-&0\\
0&0&-&0&-&0&0&0&0&0&-&0&-&0&-&-&0&1&0&0&0&0&0&-&0&0&0&1&1&0&0&0&0&0&1&0&0&-&0&0\\
0&-&0&0&0&0&0&1&0&-&0&0&1&-&0&-&1&0&0&0&0&0&-&0&0&0&1&0&0&0&0&-&0&1&0&0&-&0&0&0\\
-&0&0&0&0&0&1&0&-&0&0&0&1&1&-&0&0&0&0&-&0&-&0&0&0&1&0&0&0&0&-&0&1&0&0&0&0&0&0&1\\
0&0&0&-&-&0&0&0&0&-&0&0&0&0&-&0&0&-&-&1&0&1&0&0&0&0&0&-&0&0&0&1&1&0&0&0&0&0&1&0\\
0&0&-&0&0&0&0&1&-&0&0&0&0&-&0&0&-&0&-&-&1&0&0&0&0&0&-&0&0&0&1&0&0&0&0&-&0&1&0&0\\
0&-&0&0&0&0&1&0&0&0&0&1&-&0&0&0&1&-&0&-&0&0&0&-&0&-&0&0&0&1&0&0&0&0&-&0&1&0&0&0\\
-&0&0&0&0&1&0&0&0&0&1&0&0&0&0&1&1&1&-&0&0&0&-&0&-&0&0&0&1&0&0&0&0&-&0&0&0&0&0&-\\
0&0&0&-&0&0&0&1&0&1&0&0&-&0&0&0&0&-&0&0&0&-&-&1&1&0&0&0&0&0&-&0&0&0&1&0&0&0&0&-\\
0&0&-&0&0&0&1&0&1&0&0&0&0&0&0&1&-&0&0&0&-&0&-&-&0&0&0&-&0&-&0&0&0&1&0&0&0&0&-&0\\
0&-&0&0&0&1&0&0&0&0&0&-&0&0&1&0&0&0&0&1&1&-&0&-&0&0&-&0&-&0&0&0&1&0&0&0&0&-&0&0\\
-&0&0&0&1&0&0&0&0&0&-&0&0&1&0&0&0&0&1&0&1&1&-&0&0&-&0&0&0&0&0&1&0&0&0&-&-&0&0&0\\
0&0&0&-&0&0&1&0&0&0&-&0&1&0&0&0&0&0&0&1&-&0&0&0&0&-&-&1&0&0&0&-&0&-&0&0&0&1&0&0\\
0&0&-&0&0&1&0&0&0&-&0&0&0&0&0&-&0&0&1&0&0&0&0&1&-&0&-&-&0&0&-&0&-&0&0&0&1&0&0&0\\
0&-&0&0&1&0&0&0&-&0&0&0&0&0&-&0&0&1&0&0&0&0&1&0&1&-&0&-&0&-&0&0&0&0&0&1&0&0&0&-\\
-&0&0&0&0&0&0&-&0&0&0&1&0&-&0&0&1&0&0&0&0&1&0&0&1&1&-&0&-&0&0&0&0&0&1&0&0&0&-&0\\
0&0&0&-&0&1&0&0&1&0&0&0&0&-&0&0&0&0&0&-&0&0&1&0&0&0&0&1&0&-&-&1&0&0&-&0&-&0&0&0\\
0&0&-&0&1&0&0&0&0&0&0&-&-&0&0&0&0&0&-&0&0&1&0&0&0&0&1&0&-&0&-&-&0&-&0&0&0&0&0&1\\
0&-&0&0&0&0&0&-&0&0&-&0&0&0&0&1&0&-&0&0&1&0&0&0&0&1&0&0&1&-&0&-&-&0&0&0&0&0&1&0\\
-&0&0&0&0&0&-&0&0&-&0&0&0&0&1&0&-&0&0&0&0&0&0&-&1&0&0&0&1&1&-&0&0&0&0&1&0&1&0&0\\
0&0&0&-&1&0&0&0&0&0&0&1&0&0&0&-&-&0&0&0&0&0&-&0&0&1&0&0&0&0&1&0&0&-&-&1&0&-&0&0\\
0&0&-&0&0&0&0&-&0&0&1&0&0&0&-&0&0&0&0&1&0&-&0&0&1&0&0&0&0&1&0&0&-&0&-&-&-&0&0&0\\
0&-&0&0&0&0&-&0&0&1&0&0&0&-&0&0&0&0&1&0&-&0&0&0&0&0&0&-&1&0&0&0&1&-&0&-&0&0&0&1\\
-&0&0&0&0&-&0&0&1&0&0&0&-&0&0&0&0&1&0&0&0&0&0&1&0&0&-&0&0&0&0&-&1&1&-&0&0&0&1&0\\
0&0&0&-&0&0&0&-&-&0&0&0&0&0&1&0&0&0&-&0&0&0&0&1&0&-&0&0&1&0&0&0&0&1&0&0&0&-&-&1\\
0&0&-&0&0&0&-&0&0&0&0&1&0&1&0&0&0&-&0&0&0&0&1&0&-&0&0&0&0&0&0&-&1&0&0&0&-&0&-&-\\
0&-&0&0&0&-&0&0&0&0&1&0&1&0&0&0&-&0&0&0&0&1&0&0&0&0&0&1&0&0&-&0&0&0&0&-&1&-&0&-\\
-&0&0&0&-&0&0&0&0&1&0&0&0&0&0&-&0&0&0&1&1&0&0&0&0&0&1&0&0&-&0&0&0&0&-&0&1&1&-&0\\
  \end{array}\right)
 \]
 \end{tiny}
 \caption{A quasi-balanced $\w(40,12)$.}
 \label{signed-w-40-12}
\end{figure}

\begin{figure}[H]
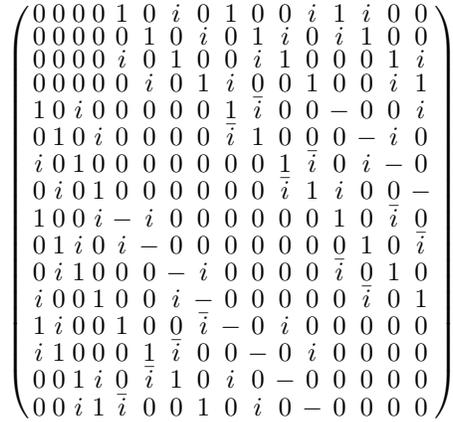

 \[
  \arraycolsep=1.25pt\def\arraystretch{0.625}
 \left(\begin{array}{cccccccccccccccc}
0 &0 &0 &0 &1 &0 &i &0 &1 &0 &0 &i &1 &i &0 &0 \\
0 &0 &0 &0 &0 &1 &0 &i &0 &1 &i &0 &i &1 &0 &0 \\
0 &0 &0 &0 &i &0 &1 &0 &0 &i &1 &0 &0 &0 &1 &i \\
0 &0 &0 &0 &0 &i &0 &1 &i &0 &0 &1 &0 &0 &i &1 \\
1 &0 &i &0 &0 &0 &0 &0 &1 &\bar i &0 &0 &- &0 &0 &i \\
0 &1 &0 &i &0 &0 &0 &0 &\bar i &1 &0 &0 &0 &- &i &0 \\
i &0 &1 &0 &0 &0 &0 &0 &0 &0 &1 &\bar i &0 &i &- &0 \\
0 &i &0 &1 &0 &0 &0 &0 &0 &0 &\bar i &1 &i &0 &0 &- \\
1 &0 &0 &i &- &i &0 &0 &0 &0 &0 &0 &1 &0 &\bar i &0 \\
0 &1 &i &0 &i &- &0 &0 &0 &0 &0 &0 &0 &1 &0 &\bar i \\
0 &i &1 &0 &0 &0 &- &i &0 &0 &0 &0 &\bar i &0 &1 &0 \\
i &0 &0 &1 &0 &0 &i &- &0 &0 &0 &0 &0 &\bar i &0 &1 \\
1 &i &0 &0 &1 &0 &0 &\bar i &- &0 &i &0 &0 &0 &0 &0 \\
i &1 &0 &0 &0 &1 &\bar i &0 &0 &- &0 &i &0 &0 &0 &0 \\
0 &0 &1 &i &0 &\bar i &1 &0 &i &0 &- &0 &0 &0 &0 &0 \\
0 &0 &i &1 &\bar i &0 &0 &1 &0 &i &0 &- &0 &0 &0 &0 \\
\end{array}\right)
 \]
 \caption{Quasi-balanced signing of an $\srg(16,6,2,2)$ over $\R_4$.}
 \label{srg-16-6-2-2}
\end{figure}

\begin{figure}[H]
 \[
 \arraycolsep=1.25pt\def\arraystretch{0.625}
 \left(
 \begin{array}{cccccccccccccccc}
0&1&1&1&1&1&0&0&0&0&0&0&0&0&0&0\\
1&0&0&0&0&0&1&1&1&1&0&0&0&0&0&0\\
1&0&0&0&0&0&-&0&0&0&1&1&1&0&0&0\\
1&0&0&0&0&0&0&-&0&0&-&0&0&1&1&0\\
1&0&0&0&0&0&0&0&-&0&0&-&0&-&0&1\\
1&0&0&0&0&0&0&0&0&-&0&0&-&0&-&-\\
0&1&-&0&0&0&0&0&0&0&0&0&0&1&-&1\\
0&1&0&-&0&0&0&0&0&0&0&-&1&0&0&-\\
0&1&0&0&-&0&0&0&0&0&1&0&-&0&1&0\\
0&1&0&0&0&-&0&0&0&0&-&1&0&-&0&0\\
0&0&1&-&0&0&0&0&1&-&0&0&0&0&0&1\\
0&0&1&0&-&0&0&-&0&1&0&0&0&0&-&0\\
0&0&1&0&0&-&0&1&-&0&0&0&0&1&0&0\\
0&0&0&1&-&0&1&0&0&-&0&0&1&0&0&0\\
0&0&0&1&0&-&-&0&1&0&0&-&0&0&0&0\\
0&0&0&0&1&-&1&-&0&0&1&0&0&0&0&0
 \end{array}
 \right)
\]
\caption{An srg-balanced signing of $\srg(16,5,0,2)$.}
\label{srg-16-5-0-2}
\end{figure}

\begin{figure}[H]
 \[
 \arraycolsep=1.25pt\def\arraystretch{0.625}
 \left(
 \begin{array}{cccccccccccccccc}
0&0&0&0&0&0&0&1&1&1&1&1&1&1&1&1\\
0&0&0&0&1&1&1&0&0&0&-&-&-&1&1&1\\
0&0&0&0&1&1&1&1&1&1&0&0&0&-&-&-\\
0&0&0&0&1&1&1&-&-&-&1&1&1&0&0&0\\
0&1&1&1&0&0&0&0&-&1&0&-&1&0&-&1\\
0&1&1&1&0&0&0&1&0&-&1&0&-&1&0&-\\
0&1&1&1&0&0&0&-&1&0&-&1&0&-&1&0\\
1&0&-&1&0&-&1&0&0&0&0&-&1&0&1&-\\
1&0&-&1&1&0&-&0&0&0&1&0&-&-&0&1\\
1&0&-&1&-&1&0&0&0&0&-&1&0&1&-&0\\
1&1&0&-&0&-&1&0&1&-&0&0&0&0&-&1\\
1&1&0&-&1&0&-&-&0&1&0&0&0&1&0&-\\
1&1&0&-&-&1&0&1&-&0&0&0&0&-&1&0\\
1&-&1&0&0&-&1&0&-&1&0&1&-&0&0&0\\
1&-&1&0&1&0&-&1&0&-&-&0&1&0&0&0\\
1&-&1&0&-&1&0&-&1&0&1&-&0&0&0&0
 \end{array}
 \right)
\]
\caption{An srg-balanced signing of an $\srg(16,9,4,6)$.}
\label{sgr-16-9-4-6}
\end{figure}

\begin{figure}[H]
\begin{small}
 \[
 \arraycolsep=1.25pt\def\arraystretch{0.625}
 \left(
 \begin{array}{cccccccccccccccccccccccccccc}
0&0&0&0&0&0&0&0&0&0&0&0&0&0&0&0&1&1&1&1&1&1&1&1&1&1&1&1\\
0&0&1&1&1&1&0&1&1&0&0&1&1&0&0&0&-&-&0&0&0&0&1&1&0&0&0&0\\
0&1&0&-&1&0&1&1&0&1&0&1&0&1&0&0&1&0&-&0&0&0&-&0&1&0&0&0\\
0&1&1&0&0&-&-&0&-&-&0&0&1&1&0&0&0&-&1&0&0&0&0&-&1&0&0&0\\
0&1&-&0&0&-&1&1&0&0&1&-&0&0&1&0&-&0&0&-&0&0&1&0&0&1&0&0\\
0&1&0&1&1&0&-&0&1&0&1&0&-&0&1&0&0&1&0&1&0&0&0&-&0&-&0&0\\
0&0&1&-&1&-&0&0&0&-&-&0&0&-&1&0&0&0&-&1&0&0&0&0&-&1&0&0\\
0&1&-&0&-&0&0&0&1&-&-&1&0&0&0&1&-&0&0&0&1&0&-&0&0&0&1&0\\
0&1&0&1&0&-&0&-&0&1&-&0&-&0&0&-&0&-&0&0&1&0&0&1&0&0&-&0\\
0&0&1&-&0&0&1&-&1&0&1&0&0&1&0&1&0&0&1&0&1&0&0&0&-&0&-&0\\
0&0&0&0&1&1&1&-&-&-&0&0&0&0&1&-&0&0&0&-&1&0&0&0&0&-&1&0\\
0&1&-&0&1&0&0&-&0&0&0&0&1&-&-&1&1&0&0&0&0&-&1&0&0&0&0&-\\
0&1&0&-&0&1&0&0&1&0&0&-&0&-&-&-&0&-&0&0&0&1&0&-&0&0&0&1\\
0&0&1&1&0&0&1&0&0&-&0&-&-&0&-&1&0&0&-&0&0&-&0&0&1&0&0&1\\
0&0&0&0&1&1&-&0&0&0&-&-&-&1&0&1&0&0&0&-&0&1&0&0&0&1&0&-\\
0&0&0&0&0&0&0&1&-&-&1&1&-&-&-&0&0&0&0&0&1&1&0&0&0&0&-&-\\
1&1&1&0&-&0&0&-&0&0&0&1&0&0&0&0&0&1&-&-&-&1&1&0&0&0&0&0\\
1&1&0&-&0&1&0&0&-&0&0&0&-&0&0&0&-&0&1&1&-&-&0&1&0&0&0&0\\
1&0&-&1&0&0&1&0&0&-&0&0&0&1&0&0&1&-&0&1&-&1&0&0&-&0&0&0\\
1&0&0&0&-&1&-&0&0&0&1&0&0&0&1&0&1&-&-&0&1&-&0&0&0&1&0&0\\
1&0&0&0&0&0&0&1&1&-&-&0&0&0&0&-&1&1&1&-&0&-&0&0&0&0&-&0\\
1&0&0&0&0&0&0&0&0&0&0&-&1&1&-&-&-&1&-&1&1&0&0&0&0&0&0&-\\
1&-&-&0&1&0&0&-&0&0&0&1&0&0&0&0&-&0&0&0&0&0&0&-&1&1&-&1\\
1&-&0&-&0&-&0&0&1&0&0&0&-&0&0&0&0&-&0&0&0&0&1&0&1&-&1&-\\
1&0&1&1&0&0&1&0&0&1&0&0&0&-&0&0&0&0&1&0&0&0&-&-&0&1&1&-\\
1&0&0&0&1&-&-&0&0&0&1&0&0&0&-&0&0&0&0&-&0&0&-&1&-&0&1&1\\
1&0&0&0&0&0&0&1&-&1&-&0&0&0&0&1&0&0&0&0&1&0&1&-&-&-&0&1\\
1&0&0&0&0&0&0&0&0&0&0&-&1&-&1&1&0&0&0&0&0&1&-&1&1&-&-&0
 \end{array}
 \right)
\]
\end{small}
\caption{An srg-balanced signing of an $\srg(28,12,6,4)$}
\label{srg-28-12-6-4}
\end{figure}

\end{document}